\newtheorem{thrm}{Theorem}[section]
\newtheorem{lem}[thrm]{Lemma}
\newtheorem{prop}[thrm]{Proposition}
\theoremstyle{definition}
\newtheorem{definition}[thrm]{Definition}
\newtheorem{ddef}[thrm]{}
\newtheorem{remark}[thrm]{Remark}
\numberwithin{equation}{section}
\author{Zhichao Liu}
\address{
Department of Mathematics\\
Hebei Normal University\\
Shijiazhuang, 050024, China}
\email{lzc.12@outlook.com}
\keywords{injective, inductive limit, Elliott-Thomsen algebra.}
\subjclass{Primary 46L05, Secondary 46L35.}
\begin{document}

\title[Injectivity of the connecting homomorphisms]{Injectivity of the connecting homomorphisms in inductive limits of Elliott-Thomsen algebras}

\begin{abstract}
Let $A$ be the inductive limit of a sequence
 $$
 A_1\,\xrightarrow{\phi_{1,2}}\,A_2\,\xrightarrow{\phi_{2,3}}\,A_3\rightarrow\cdots
 $$
 with $A_n=\bigoplus_{i=1}^{n_i}A_{[n,i]}$, where all the $A_{[n,i]}$ are Elliott-Thomsen algebras and $\phi_{n,n+1}$ are homomorphisms. In this paper, we will prove that $A$ can be written as another inductive limit
 $$
 B_1\,\xrightarrow{\psi_{1,2}}\,B_2\,\xrightarrow{\psi_{2,3}}\,B_3\rightarrow\cdots
 $$
 with $B_n=\bigoplus_{i=1}^{n_i'}B_{[n,i]'}$, where all the $B_{[n,i]'}$ are Elliott-Thomsen algebras and with the extra condition that all the   $\psi_{n,n+1}$ are injective.
\end{abstract}
\maketitle

\section{Introduction} \label{sect1}
In 1997, Li proved the result that if $A=\underrightarrow{lim}(A_n,\phi_{m,n})$ is an  inductive limit $\mathrm{C}^*$-algebra with $A_n=\bigoplus_{i=1}^{n_i}M_{[n,i]}(C(X_{[n,i]}))$, where all $X_{[n,i]}$ are graphs, $n_i$ and $[n,i]$ are positive integers,
 then one can write $A=\underrightarrow{lim}(B_n,\psi_{m,n})$, where $B_n=\bigoplus_{i=1}^{n_i'}M_{[n,i]'}(C(Y_{[n,i]'}))$ are finite direct
  sums of matrix algebras over graphs $Y_{[n,i]'}$ with the extra property that the homomorphisms $\psi_{m,n}$ are injective \cite{Li1}.
   This played an important role in the classification of simple $AH$ algebras with one-dimensional local spectra
  (see \cite{Ell1,Ell2,Li1,Li2,Li3}). This result was extended to the case of $AH$ algebras \cite{EGL},
  in which the space $X_{[n,i]}$ are replaced by connected finite simplicial complexes.

 In this article, we consider the $\mathrm{C}^*$-algebra $A$ which can be expressed as the inductive limit of a sequence
 $$
 A_1\,\xrightarrow{\phi_{1,2}}\,A_2\,\xrightarrow{\phi_{2,3}}\,A_3\rightarrow\cdots,
 $$
 where all $A_i$ are Elliott-Thomsen algebras and $\phi_{n,n+1}$ are homomorphisms.
 These algebras were introduced by Elliott in \cite{Ell3} and Thomsen in \cite{ET}, and are also called one-dimensional non-commutative finite CW complexes. We will prove that $A$ can be written as inductive limits of sequences of Elliott-Thomsen algebras with the property that all connecting homomorphisms are injective. The results in this paper will be used in \cite{ALZ} to classify real rank zero inductive limits of one-dimensional non-commutative finite CW complexes.

\section{Preliminaries}
\begin{definition}
  Let $F_1$ and $F_2$ be two finite dimensional $C^*$-algebras. Suppose that there are two  homomorphisms $\varphi_0,\varphi_1\,:F_1\rightarrow F_2$. Consider the $C^*$-algebra
  $$
  A=A(F_1,F_2,\varphi_0,\varphi_1)=\{(f,a)\in C([0,1],F_2)\oplus F_1 :f(0)=\varphi_0(a),\,\,f(1)=\varphi_1(a)\}.
  $$
\end{definition}
 These $C^*$-algebras have been introduced into the Elliott program by Elliott and Thomsen in \cite{ET}. Denote by $\mathcal{C}$ the class of all unital
 $C^*$-algebras of the form $A(F_1,F_2,\varphi_0,\varphi_1)$. (This class includes the finite dimensional $C^*$-algebras, the case $F_2=0$.) These $C^*$-algebras will be called Elliott-Thomsen algebras. Following \cite{GLN}, let us say that a unital ${\mathrm C}^*$-algebra $A\in\mathcal{C}$ is minimal, if it is indecomposable, i.e., not the direct sum of two or more ${\mathrm C}^*$-algebras in $\mathcal{C}$.

\begin{prop}[\cite{GLN}]
  Let $A=A(F_1,F_2,\varphi_0,\varphi_1)$, where $F_1=\bigoplus_{j=1}^p M_{k_j}(\mathbb{C})$, $F_2$ $=\bigoplus_{i=1}^lM_{l_i}(\mathbb{C})$ and $\varphi_0,\varphi_1 :F_1\rightarrow F_2$ be two homomorphisms.
  Let $\varphi_{0*},\varphi_{1*}:K_0(F_1)=\mathbb{Z}^p\rightarrow K_0(F_1)=\mathbb{Z}^l$ be represented by matrices $\alpha=({\alpha_{ij}})_{l\times p}$ and $\beta=({\beta_{ij}})_{l\times p}$, where $\alpha_{ij},\beta_{ij}\in \mathbb{Z}_+$ for each pair $i,j$. Then
  $$
  K_0(A)=Ker(\alpha -\beta),
\quad
 K_1(A)=\mathbb{Z}^l/Im(\alpha-\beta).
 $$
\end{prop}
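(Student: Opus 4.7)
The plan is to extract $K_0(A)$ and $K_1(A)$ from the natural short exact sequence
$$0 \longrightarrow SF_2 \longrightarrow A \xrightarrow{\pi} F_1 \longrightarrow 0,$$
where $\pi(f,a)=a$ and the kernel, consisting of pairs $(f,0)$ with $f(0)=f(1)=0$, is the suspension $SF_2=C_0((0,1),F_2)$. This is the standard short exact sequence presenting an Elliott--Thomsen algebra as an extension of a finite-dimensional algebra by a suspended finite-dimensional algebra.

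Since $F_1$ and $F_2$ are finite dimensional we have $K_1(F_1)=0$ and $K_1(SF_2)\cong K_0(F_2)=\mathbb{Z}^l$, while $K_0(SF_2)\cong K_1(F_2)=0$. The six-term exact sequence therefore collapses to
$$0 \longrightarrow K_0(A) \longrightarrow \mathbb{Z}^p \xrightarrow{\;\partial\;} \mathbb{Z}^l \longrightarrow K_1(A) \longrightarrow 0,$$
so that $K_0(A)=\ker\partial$ and $K_1(A)=\mathrm{coker}\,\partial$. It thus suffices to identify the exponential map $\partial$ with $\alpha-\beta$ (the overall sign is irrelevant for kernel and cokernel).

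To compute $\partial$ on the class of a minimal projection $e\in F_1$, I would lift $e$ to the self-adjoint element $\tilde{e}=(f_e,e)\in A$, where $f_e(t)=(1-t)\varphi_0(e)+t\varphi_1(e)$ is the linear interpolation. The exponential boundary then sends $[e]$ to the class of $\exp(2\pi i\tilde{e})$, whose $SF_2$-component is the loop $t\mapsto\exp(2\pi i f_e(t))$ in $F_2$, beginning and ending at the unit because each $\varphi_j(e)$ is a projection. Via the Bott isomorphism $K_1(SF_2)\cong K_0(F_2)$, this loop is identified with $[\varphi_1(e)]-[\varphi_0(e)]$, yielding $\partial[e]=(\beta-\alpha)[e]$ and hence $\partial=\beta-\alpha$ on $\mathbb{Z}^p$.

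The main technical step is the last identification. I would handle it by a homotopy argument in the self-adjoint matrices over $F_2$: deform $f_e$ (rel endpoints) within self-adjoint paths to a model path that is a constant projection outside a small interval and traverses the spectral gap linearly there, where the Bott class of the exponentiated loop is a direct winding-number computation on each matrix block of $F_2$. Once $\partial$ is identified with $\pm(\alpha-\beta)$, substituting into the six-term sequence immediately gives $K_0(A)=\mathrm{Ker}(\alpha-\beta)$ and $K_1(A)=\mathbb{Z}^l/\mathrm{Im}(\alpha-\beta)$.
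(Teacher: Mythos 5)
The paper does not actually prove this proposition: it is stated as a cited result from \cite{GLN} with no argument given. Your approach, via the evaluation extension $0\to SF_2\to A\to F_1\to 0$ and its six-term exact sequence, is the standard route to this computation and is essentially what appears in the cited reference, so there is no divergence of method to report.

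Your argument is correct in its structure: $K_1(F_1)=K_0(SF_2)=0$ does collapse the six-term sequence to the four-term sequence you write, and the self-adjoint lift $\tilde e=(f_e,e)$ with $f_e(t)=(1-t)\varphi_0(e)+t\varphi_1(e)$ is the right choice for computing the exponential map. One remark on the final identification, which you flag as the main technical step and then outline via a homotopy to a model path: the homotopy is avoidable. On each block $M_{l_i}(\mathbb{C})$ of $F_2$ the class of the loop $t\mapsto\exp(2\pi i f_e(t))$ in $\pi_1(U_{l_i})\cong\mathbb{Z}$ is its determinant winding number, and
$$\det\exp\bigl(2\pi i f_e(t)\bigr)=\exp\bigl(2\pi i\,\mathrm{Tr}\,f_e(t)\bigr)=\exp\Bigl(2\pi i\bigl((1-t)\,\mathrm{rank}\,\varphi_0^i(e)+t\,\mathrm{rank}\,\varphi_1^i(e)\bigr)\Bigr),$$
whose winding number as $t$ runs over $[0,1]$ is exactly $\mathrm{rank}\,\varphi_1^i(e)-\mathrm{rank}\,\varphi_0^i(e)=\beta_{ij}-\alpha_{ij}$ when $e$ is a minimal projection in the $j$-th summand of $F_1$. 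This gives $\partial=\beta-\alpha$ directly, with no need to deform $f_e$ at all; the homotopy you sketch would also work but requires care precisely because $\varphi_0(e)$ and $\varphi_1(e)$ need not have equal rank, so no path of projections connects them and the ``model path'' must genuinely pass through non-projections. With that simplification noted, your proposal establishes the proposition.
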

\begin{ddef}
   We use the notation $\#(\cdot)$ to denote the cardinal number of a set, the sets under consideration will be sets with multiplicity,
  and then we  shall also count multiplicity when we use the notation $\#$.
  We use $\bullet$ or $\bullet\bullet$ to denote any possible positive integer. We shall use $\{a^{\thicksim k}\}$ to denote $\{\underbrace{a,\cdots,a}_{k\ times}\}$. For example,
  $
  \{a^{\thicksim 3},b^{\thicksim 2}\}=\{a,a,a,b,b\}.
  $
\end{ddef}
\begin{ddef}
  Let us use $\theta_1,\theta_2,\cdots,\theta_p$ to denote the spectrum of $F_1$ and denote the spectrum of $C([0,1],F_2)$ by $(t,i)$,
 where $0\leq t\leq1$ and $i\in \{1,2,\cdots,l\}$ indicates that it is in $i^{th}$ block of $F_2$. So
 $$
 Sp(C([0,1],F_2))=\coprod_{i=1}^{l}\{(t,i),\,0\leq t\leq1\}.
 $$
   Using identification of $f(0)=\varphi_0(a)$ and $f(1)=\varphi_1(a)$ for $(f,a)\in A,\,(0,i)\in Sp(C[0,1])$ is identified with
 $$
 (\theta_1^{\thicksim\alpha_{i1}},\theta_2^{\thicksim\alpha_{i2}},\cdots,\theta_p^{\thicksim\alpha_{ip}})\subset Sp(F_1)
 $$
 and $(1,i)\in Sp(C([0,1],F_2))$ is identified with
 $$
 (\theta_1^{\thicksim\beta_{i1}},\theta_2^{\thicksim\beta_{i2}},\cdots,\theta_p^{\thicksim\beta_{ip}})\subset Sp(F_1)
 $$
 as in $Sp(A)=Sp(F_1)\cup \coprod_{i=1}^{l}(0,1)_i$.
\end{ddef}
\begin{ddef}

 With $A=A(F_1,F_2,\varphi_0,\varphi_1)$ as above, let $\varphi:A\rightarrow M_n(\mathbb{C})$ be a homomorphism, then there exists a unitary $u$ such that
 $$
 \varphi(f,a)=u^*\cdot{\rm diag}\big(\underbrace{a(\theta_1),
 \cdots,a(\theta_1)}_{t_1},\cdots,\underbrace{a(\theta_p),\cdots,a(\theta_p)}_{t_p},
 f(y_1),\cdots,f(y_{\bullet}),0_{\bullet\bullet}\big)\cdot u,
 $$
 where $y_1,y_2,\cdots,y_{\bullet}\in \coprod_{i=1}^{l}[0,1]_i$. For $y=(0,i)$ (also denoted by $0_i$), one can replace $f(y)$ by
 $$\big(\underbrace{a(\theta_1),
 \cdots,a(\theta_1)}_{\alpha_{i1}},\cdots,\underbrace{a(\theta_p),\cdots,a(\theta_p)}_{\alpha_{ip}}\big)$$
 in the above expression, and do the same with $y=(1,i)$. After this procedure, we can assume each $y_k$ is strictly in the open interval $(0,1)_i$ for some $i$.
 We write the spectrum of $\varphi$ by
 $$
 Sp\varphi=\{\theta_1^{\thicksim t_1},\theta_2^{\thicksim t_2},\cdots,\theta_p^{\thicksim t_p},y_1,y_2,\cdots,y_{\bullet}\},
 $$
 where $y_k\in \coprod_{i=1}^{l}(0,1)_i$.

 If $f=f^*\in A$, we use $Eig(\varphi(f))$ to denote the eigenvalue list of $\varphi(f)$,  and then
 $$\#(Eig(\varphi(f)))=n\,\,{\rm(counting\,\,multiplicity)}.$$
\end{ddef}
\begin{ddef}\label{mappss}
  Let $A=A(F_1,F_2,\varphi_0,\varphi_1)\in \mathcal{C}$ be minimal. Written $a\in F_1$ as $a=(a(\theta_1),a(\theta_2)$, $\cdots,$ $a(\theta_p))$,
  $f(t)\in C([0,1],F_2)$ as $$f(t)=(f(t,1),f(t,2),\cdots,f(t,l))$$ where $a(\theta_j)\in M_{k_j}(\mathbb{C})$,
  $f(t,i)\in C([0,1],M_{l_i}(\mathbb{C}))$.

  For any $(f,a)\in A$ and $i\in\{1,2,\cdots,l\}$,
  define $\pi_t:A\rightarrow C([0,1],F_2)$ by $\pi_t(f,a)=f(t)$ and $\pi_t^i:A\rightarrow C([0,1],M_{l_i}(\mathbb{C}))$ by
  $\pi_t^i(f,a)=f(t,i)$ where $t\in(0,1)$ and $\pi_0^i(f,a)=f(0,i)$ (denoted by $\varphi_0^i(a)$), $\pi_1^i(f,a)=f(1,i)$
  (denoted by $\varphi_1^i(a)$). There is a canonical map $\pi_e:\,A\rightarrow F_1$ defined by $\pi_e((f,a))=a$, for all $j=\{1,2,\cdots,p\}$.
\end{ddef}
\begin{ddef}
  We use the convention that $A=A(F_1,F_2,\varphi_0,\varphi_1),\,B=B(F_1',F_2',\varphi_0',\varphi_1')$,
  where
  $$
  F_1=\bigoplus_{j=1}^p M_{k_j}(\mathbb{C}),\,\,F_2=\bigoplus_{i=1}^lM_{l_i}(\mathbb{C}),\,\,F_1'=\bigoplus_{j'=1}^{p'}M_{k_{j'}'}(\mathbb{C}),\,\,F_2'=\bigoplus_{i'=1}^{l'} M_{l_{i'}'}(\mathbb{C}).
  $$
  Set $L(A)=\sum_{i=1}^ll_i$, $L(B)=\sum_{i'=1}^{l'}l_{i'}'$. Denote $\{e_{ss'}^i\}(1\leq i\leq l,\,1\leq s,s'\leq l_i)$  the set of matrix units for $\bigoplus_{i=1}^l M_{l_i}({\mathbb{C}})$ and $\{f_{ss'}^j\}(1\leq j\leq p,\,1\leq s,s'\leq k_j)$ the set of matrix units for $\bigoplus_{j=1}^p  M_{k_j}({\mathbb{C}})$.
\end{ddef}
\begin{ddef}
  For each $\eta=\frac{1}{m}$ where $m\in\mathbb{N}_+$. Let $0=x_0<x_1<\cdots<x_{m}=1$ be a partition of $[0,1]$ into $m$ subintervals with equal length $\frac{1}{m}$. We will define a finite subset $H(\eta)\subset A_+$, consisting of two kinds of
 elements as described below.

 (a) For each subset $X_j=\{\theta_j\}\subset Sp(F_1)=\{\theta_1,\theta_2,\cdots,\theta_p\}$ and a list of integers $a_1,b_2,\cdots,a_l,b_l$ with
 $0\leq a_i<a_i+2\leq b_i\leq m$, denote $W_j\triangleq\coprod_{\{i|\alpha_{ij}\neq0\}}[0,a_i\eta]_i\cup\coprod_{\{i|\beta_{ij}\neq 0\}}[b_i\eta,1]_i$.
 Then we call $W_j$  the closed neighborhood of $X_j$, we define element $(f,a)\in A_+$ corresponding to $X_j\cup W_j$  as follows:

 Let $a=(a(\theta_1),a(\theta_2),\cdots,a(\theta_p))\in F_1$, where $a(\theta_j)=I_{k_j}$ and $a(\theta_s)=0_{k_s}$, if $s\neq j$. For each $t\in[0,1]_i,\,i=\{1,2,\cdots,l\}$, define
 $$
 f(t,i)=
 \begin{cases}
  \varphi_0^i(a)\dfrac{\eta-dist(t,[0,a_i\eta]_i)}{\eta}, & \mbox{if } 0\leq t\leq (a_i+1)\eta \\
  0, & \mbox{if } (a_i+1)\eta\leq t\leq (b_i-1)\eta \\
  \varphi_1^i(a)\dfrac{\eta-dist(t,[b_i\eta,1]_i)}{\eta}, & \mbox{if } (b_i-1)\eta\leq t\leq 1
\end{cases}
$$
All such elements $(f,a)=(f(t,1),f(t,2),\cdots,f(t,l))\in A_+$ are included in the set $H(\eta)$ and are called test functions of type 1.

(b) For each closed subset $X=\bigcup_s[x_{r_s},x_{r_{s+1}}]_i \subset [\eta,1-\eta]_i$ (the finite union of
closed intervals $[x_r,x_{r+1}]$ and points). So there are finite subsets for each $i$. Define $(f,a)$ corresponding to $X$ by $a=0$ and for each
$t\in(0,1)_r,r\neq i,f(t,r)=0$ and for $t\in(0,1)_i$ define
$$
f(t,i)=
\begin{cases}
  1-\dfrac{dist(t,X)}{\eta} , & \mbox{if } dist(t,X)<\eta \\
  0, & \mbox{if } dist(t,X)\geq \eta.
\end{cases}
$$
All such elements are called test functions of type 2.

Note that for any closed subset $Y\subset[\eta,1-\eta]$, there is a closed subset $X$ consisting of the union of the
intervals and points such that $X\supset Y$ and for any $x\in X$, $dist(x,Y)\leq\eta$.
\end{ddef}
\begin{ddef}\label{defg}
 Take $\eta$ as above, define a finite set $\widetilde{H}(\eta)$ as follows:

 In the construction of test functions of type 1, we may use $f_{ss'}^j\in F_1$ in place of $a\in F_1$, assume that all these elements
 are in $\widetilde{H}(\eta)$, and for all test functions $h\in H(\eta)$ of type 2, assume that all these elements $e_{ss'}^i\cdot h$ are in $\widetilde{H}(\eta)$.

 Then there exists a nature surjective map $\kappa: \widetilde{H}(\eta)\rightarrow H(\eta)$, for any subset $G\subset H(\eta)$, define a finite subset
 $\widetilde{G}\subset\widetilde{H}(\eta)$ by
 $$
 \widetilde{G}=\{\,h\,|\,h\in \widetilde{H}(\eta),\,\,\kappa(h)\in G\,\}.
 $$
\end{ddef}
\begin{ddef}
  Suppose $A$ is a $\mathrm{C}^*$-algebra, $B\subset A$ is a subalgebra, $F\subset A$ is a finite subset and let $\varepsilon>0$. If
 for each $f\in F$, there exists an element $g\in B$ such that $\|f-g\|<\varepsilon$, then we shall say that $F$ is approximately contained in $B$ to within $\varepsilon$, and denote this by $F\subset_{\varepsilon} B$.
\end{ddef}
  The following is clear by the standard techniques of spectral theory \cite{BBEK}.
\begin{lem} \label{rrpr}
  Let $A=\underrightarrow{lim}(A_n,\phi_{m,n})$ be an inductive limit of $\mathrm{C}^*$-algebras $A_n$ with morphisms
  $\phi_{m,n}:A_m\rightarrow A_n$. Then $A$ has $RR(A)=0$ if and only if for any finite self-adjoint subset $F\subset A_m$ and $\varepsilon >0$, there exists $n\geq m$ such
  that $$\phi_{m,n}(F)\subset_{\varepsilon}\{ f\in (A_n)_{sa}\,|\,f\,has\,finite\,spectrum\}.$$
\end{lem}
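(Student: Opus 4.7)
The plan is to establish both directions by standard inductive-limit arguments combined with functional calculus.

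For \emph{sufficiency}, assume the approximation property. Given $a\in A_{sa}$ and $\varepsilon>0$, since $\bigcup_m \phi_{m,\infty}(A_m)$ is norm-dense in $A$, one can find $m$ and $a_0\in (A_m)_{sa}$ (replacing $a_0$ by $\tfrac12(a_0+a_0^*)$ if necessary) with $\|\phi_{m,\infty}(a_0)-a\|<\varepsilon/2$. Apply the hypothesis to $F=\{a_0\}$ with tolerance $\varepsilon/2$ to obtain $n\geq m$ and $b\in (A_n)_{sa}$ of finite spectrum such that $\|\phi_{m,n}(a_0)-b\|<\varepsilon/2$. Then $\phi_{n,\infty}(b)\in A_{sa}$ still has finite spectrum, and the triangle inequality gives $\|\phi_{n,\infty}(b)-a\|<\varepsilon$, so $RR(A)=0$.

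For \emph{necessity}, assume $RR(A)=0$ and fix a finite self-adjoint subset $F\subset A_m$ together with $\varepsilon>0$. For each $f\in F$, use $RR(A)=0$ to approximate $\phi_{m,\infty}(f)$ within $\varepsilon/3$ by some $g_f=\sum_{i=1}^{N_f}\lambda_i^{(f)}p_i^{(f)}$ with the $p_i^{(f)}$ mutually orthogonal projections in $A$ and the $\lambda_i^{(f)}$ real. The crucial step is to lift the (finite) orthogonal family $\{p_i^{(f)}\}_{f,i}$ to a simultaneously orthogonal family of projections at some finite stage. By density, pick self-adjoint $a_i^{(f)}\in A_{n_0}$ for some $n_0\geq m$ with $\phi_{n_0,\infty}(a_i^{(f)})$ close to $p_i^{(f)}$; then the images under $\phi_{n_0,\infty}$ of $(a_i^{(f)})^2-a_i^{(f)}$ and of $a_i^{(f)}a_j^{(g)}$ for $(i,f)\neq(j,g)$ are as small as we wish. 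Using the identity $\|\phi_{n_0,\infty}(x)\|=\lim_k\|\phi_{n_0,k}(x)\|$ for $x\in A_{n_0}$, pass to some $n\geq n_0$ where all these quantities are genuinely small in norm. A standard spectral-theory perturbation \cite{BBEK} — apply a continuous cutoff that is $0$ near $0$ and $1$ near $1$ to each $\phi_{n_0,n}(a_i^{(f)})$, then orthogonalise successively with norm-small corrections — produces mutually orthogonal projections $q_i^{(f)}\in A_n$ such that $\phi_{n,\infty}(q_i^{(f)})$ is close to $p_i^{(f)}$.

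Set $h_f=\sum_i\lambda_i^{(f)}q_i^{(f)}\in(A_n)_{sa}$, which has finite spectrum and satisfies $\|\phi_{n,\infty}(h_f-\phi_{m,n}(f))\|<\varepsilon$. Invoking $\|\phi_{n,\infty}(y)\|=\lim_k\|\phi_{n,k}(y)\|$ for $y\in A_n$ once more, choose $k\geq n$ large enough that $\|\phi_{n,k}(h_f)-\phi_{m,k}(f)\|<\varepsilon$ for every $f\in F$; the elements $\phi_{n,k}(h_f)\in(A_k)_{sa}$ still have finite spectrum, which yields the desired approximation at level $k$. The only genuine technical point is the simultaneous orthogonal lifting of the projections $p_i^{(f)}$ — precisely what is packaged into the phrase ``standard techniques of spectral theory'' and reference \cite{BBEK}; every other step is a routine norm estimate.
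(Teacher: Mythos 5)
The paper gives no proof of this lemma --- it only cites the ``standard techniques of spectral theory'' in \cite{BBEK} --- so your argument is an attempt to fill in that citation rather than a variant of an existing argument in the paper. The sufficiency direction is fine. In the necessity direction, however, the key lifting step asserts a false fact: you claim that $\phi_{n_0,\infty}(a_i^{(f)}a_j^{(g)})$ is as small as we wish for \emph{every} pair $(i,f)\neq(j,g)$, and frame the goal as producing a ``simultaneously orthogonal family'' $\{q_i^{(f)}\}_{f,i}$. For $f\neq g$ the projections $p_i^{(f)}$ and $p_j^{(g)}$ come from independent spectral decompositions of two different finite-spectrum elements approximating $\phi_{m,\infty}(f)$ and $\phi_{m,\infty}(g)$, and there is no reason for them to be orthogonal; $p_i^{(f)}p_j^{(g)}$ can have norm close to $1$. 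So $\phi_{n_0,\infty}(a_i^{(f)}a_j^{(g)})$ need not be small, and an attempt to ``orthogonalise successively'' the entire family would fail.

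The error is benign because nothing in your conclusion uses orthogonality across distinct $f$. The element $h_f=\sum_i\lambda_i^{(f)}q_i^{(f)}$ has finite spectrum as soon as the projections $q_i^{(f)}$ are mutually orthogonal \emph{within each fixed $f$-family}, which is exactly what $RR(A)=0$ hands you. So the correct assertion at the key step is only that, for each fixed $f$, $\phi_{n_0,\infty}\big((a_i^{(f)})^2-a_i^{(f)}\big)$ is small and $\phi_{n_0,\infty}(a_i^{(f)}a_j^{(f)})$ is small for $i\neq j$; then lift and orthogonalise each $f$-family separately at a common finite stage $n$ (possible since $F$ is finite). With that correction and a more careful choice of the intermediate tolerances (you leave the quantitative dependence on $N_f$ and $\max_i|\lambda_i^{(f)}|$ implicit), the remainder of your argument --- pushing the estimate from $A$ back to a finite stage via $\|\phi_{n,\infty}(y)\|=\lim_k\|\phi_{n,k}(y)\|$ --- is sound and does match what \cite{BBEK} packages as the standard spectral-theory technique.
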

 The following is Lemma 2.3 in \cite{Liu}.
\begin{lem} \label{pair1}
  Let $A\in \mathcal{C}$, for any $1>\varepsilon >0$ and $\eta=\frac{1}{m}$ where $m\in\mathbb{N}_+$, if $\phi,\psi:A\rightarrow M_n(\mathbb{C})$ are unital homomorphisms with the condition that $Eig(\phi(h))$ and $Eig(\psi(h))$ can be paired to within $\varepsilon$ one by one for all $h\in H(\eta)$, then for each $i\in \{1,2,\cdots,l\}$, then there exists $X_i\subset Sp\phi\cap(0,1)_i$, $X_i'\subset Sp\psi\cap(0,1)_i$
   with $X_i\supset Sp\phi\cap[\eta,1-\eta]_i$, $X_i'\supset Sp\psi\cap[\eta,1-\eta]_i$ such that $X_i$ and $X_i'$ can be paired to within $2\eta$ one by one.
\end{lem}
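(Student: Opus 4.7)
The plan is to convert the eigenvalue-pairing hypothesis into a family of counting inequalities for the open-interval spectra, and then to apply Hall's marriage theorem to obtain the desired matching.

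Fix $i\in\{1,\ldots,l\}$. For each pair of partition points $x_a\le x_b$ with $\eta\le x_a\le x_b\le 1-\eta$, consider the type-2 test function $h\in H(\eta)$ associated with $X=[x_a, x_b]\subset [\eta, 1-\eta]_i$. Because $h$ vanishes on $F_1$ and on every block $(0,1)_r$ with $r\ne i$, while on the $i$th block it is a scalar multiple of $I_{l_i}$, the list $Eig(\phi(h))$ splits into three parts: (i) a string of exactly $l_i\cdot\#\bigl(Sp\phi\cap[x_a, x_b]_i\bigr)$ ones; (ii) eigenvalues in $(0,1)$ arising from points of $Sp\phi$ in the slope regions $[x_a-\eta, x_a]_i\cup [x_b, x_b+\eta]_i$; and (iii) zeros contributed by all other spectrum points. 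The same description holds for $Eig(\psi(h))$. Since the two lists pair within $\varepsilon<1$, each $\phi(h)$-eigenvalue equal to $1$ must be matched to a $\psi(h)$-eigenvalue in $(1-\varepsilon, 1]$, and such an eigenvalue can only come from a point of $Sp\psi$ lying in $[x_a-\varepsilon\eta, x_b+\varepsilon\eta]_i\subset[x_a-\eta, x_b+\eta]_i$. Cancelling the common factor $l_i$ yields
\[
\#\bigl(Sp\phi\cap[x_a, x_b]_i\bigr)\le\#\bigl(Sp\psi\cap[x_a-\eta, x_b+\eta]_i\bigr),
\]
together with the inequality obtained by exchanging $\phi$ and $\psi$.

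With these inequalities in hand, I would apply the standard line-matching criterion: for two multisets $A, B\subset\mathbb{R}$, there is a distance-$d$ one-to-one pairing of $A$ into $B$ iff $\#(A\cap I)\le\#(B\cap I_d)$ for every interval $I$, where $I_d$ denotes the closed $d$-neighbourhood. Taking $A=Sp\phi\cap[\eta, 1-\eta]_i$ and $B=Sp\psi\cap(0,1)_i$, extending the Step~1 inequality from partition-aligned intervals to arbitrary intervals costs a further $\eta$ of slack on each end, which is exactly why the conclusion produces a pairing within $2\eta$ rather than $\eta$. The symmetric construction pairs $Sp\psi\cap[\eta, 1-\eta]_i$ into $Sp\phi\cap(0,1)_i$, and the standard symmetric-difference argument combining the two matchings delivers a single bijection $X_i\to X_i'$ with $X_i\subset Sp\phi\cap(0,1)_i$ containing $Sp\phi\cap[\eta, 1-\eta]_i$, $X_i'\subset Sp\psi\cap(0,1)_i$ containing $Sp\psi\cap[\eta, 1-\eta]_i$, and every pair at distance at most $2\eta$.

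The main obstacle lies in the bookkeeping of the first step: one must check carefully that the $F_1$-part of $Sp\phi$, together with the spectrum points in blocks $(0,1)_r$ for $r\ne i$, contribute only zero eigenvalues to $\phi(h)$ and $\psi(h)$, and that the multiplicity factor $l_i$ appears consistently on both sides, so that a $\psi(h)$-eigenvalue exceeding $1-\varepsilon$ genuinely localises the associated spectrum point to the narrow slope region of $h$. Once this structural description of the eigenvalue lists is secured, the remaining matching step is a routine application of Hall's theorem on the real line.
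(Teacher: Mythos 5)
Your proposal is correct. The paper does not prove this lemma in-text (it is cited from [Liu]), but your argument is sound and follows the standard mechanism for such pairing lemmas: the type-2 test functions supported on partition-aligned intervals give counting inequalities of the form $\#\bigl(Sp\phi\cap[x_a,x_b]_i\bigr)\le\#\bigl(Sp\psi\cap(x_a-\eta,x_b+\eta)_i\bigr)$ (the hypothesis $\varepsilon<1$ is exactly what forces any $\psi(h)$-eigenvalue paired with a $1$ to exceed $1-\varepsilon>0$, hence to come from the $\varepsilon\eta$-slope region of the $i$th block, the $l_i$ factor cancelling), rounding arbitrary intervals to partition intervals costs one extra $\eta$ giving the Hall condition at scale $2\eta$, and the two one-sided injections are merged by the Mendelsohn--Dulmage argument on the alternating paths and cycles of their union. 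The only details worth writing out are the cluster argument showing that the interval condition on the line implies Hall's condition, and the path/cycle case analysis in the merging step; both are routine.
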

\section{Main results}

 In this section, we will prove the following theorem.
 \begin{thrm} \label{mainn}
  Let $A=\underrightarrow{lim}(A_n,\phi_{m,n})$ be an inductive limit of Elliott-Thomsen algebras. Then one can write $A=\underrightarrow{lim}(B_n,\psi_{m,n})$, where all the $B_n$ are Elliott-Thomsen algebras, and all the homomorphisms $\psi_{m,n}$ are injective.
\end{thrm}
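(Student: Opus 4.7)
I would follow the standard ``kill the eventual kernel'' strategy, combined with a two-level approximation. For each $n$, set $I_n = \overline{\bigcup_{m > n} \ker \phi_{n,m}}$; the quotient system $(A_n/I_n, \bar\phi_{n,m})$ has injective connecting maps and the same inductive limit $A$. Since $A_n = \bigoplus_i A_{[n,i]}$ is a direct sum of minimal Elliott--Thomsen algebras and $I_n$ decomposes accordingly, the problem reduces to analyzing a single closed ideal $I$ in one minimal $A = A(F_1, F_2, \varphi_0, \varphi_1) \in \mathcal{C}$. Closed ideals of $A$ correspond to open subsets of the primitive spectrum $\{\theta_1, \ldots, \theta_p\} \sqcup \coprod_i (0,1)_i$, with closure relations between the endpoints of the intervals and the $\theta_j$'s dictated by which of the matrix entries $\alpha_{ij}, \beta_{ij}$ are nonzero. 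Writing the complementary closed set as $Y = Y_F \sqcup \coprod_i Y_i$, the quotient $A/I$ is a \emph{generalized} Elliott--Thomsen algebra $A(F_1', F_2, \varphi_0', \varphi_1')$ in which each interval $[0,1]_i$ has been replaced by the closed set $K_i := \overline{Y_i} \subset [0,1]$, and the boundary identifications at $t = 0,1$ are restricted to $F_1' = \bigoplus_{\theta_j \in Y_F} M_{k_j}$.

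\textbf{Resolving into $\bigoplus \mathcal{C}$.} Each $K_i$ decomposes into connected components, each either a closed sub-interval (possibly degenerate to a point) or a point of a $0$-dimensional totally disconnected residue. An interval component strictly inside $(0,1)_i$ contributes a summand isomorphic to $C([0,1], M_{l_i})$, an honest Elliott--Thomsen algebra with $F_1' = M_{l_i} \oplus M_{l_i}$ and $\varphi_0', \varphi_1'$ the two evaluation projections; an interval incident to $t = 0$ or $t = 1$ inherits the corresponding block of $\varphi_0$ or $\varphi_1$ restricted to $Y_F$. Each $0$-dimensional residue contributes an AF summand $C(T) \otimes M_{l_i}$ with $T$ compact totally disconnected, which is a standard inductive limit of finite-dimensional algebras (each in $\mathcal{C}$, with $F_2 = 0$) via a Bratteli-diagram construction with \emph{injective} connecting maps. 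Putting the pieces back together, each $A_n/I_n$ can itself be written as an inductive limit $\lim_r B_n^{(r)}$ with $B_n^{(r)} \in \bigoplus \mathcal{C}$ and all inner transition maps injective.

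\textbf{Diagonal extraction and main obstacle.} From the double sequence $\{B_n^{(r)}\}$ I would extract a diagonal $B_n := B_n^{(r_n)}$, with connecting maps $\psi_{n,n+1}$ obtained by composing $\bar\phi_{n,n+1}$ with the relevant inner inclusion. The hardest step is the compatibility of the choices: the indices $r_n$ must grow fast enough that $\bar\phi_{n,n+1}$ descends to a well-defined injective homomorphism between the chosen inner approximants, while at the same time $r_n \to \infty$ rapidly enough that the diagonal limit equals $A$ and not a proper subalgebra. The quantitative tool is Lemma \ref{pair1} applied with the finite test sets $H(\eta)$ and $\widetilde{H}(\eta)$ of Definition \ref{defg} for a sequence $\eta_n \to 0$: given a finite subset $G \subset B_n^{(r_n)}$ and a tolerance, it guarantees that for sufficiently large $m$ the spectral data of $\phi_{n,m}$ aligns with that of $B_m^{(r_m)}$ within $2\eta_n$, which is enough to produce the required factorization. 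Once the diagonal is in place, verifying that each $B_n \in \bigoplus \mathcal{C}$, that every $\psi_{n,n+1}$ is injective, and that $\lim B_n \cong A$ is routine from the structural statements above.
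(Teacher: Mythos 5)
Your overall architecture (pass to the quotients $A_n/I_n$ to make the maps injective, approximate these quotients by genuine Elliott--Thomsen algebras, then intertwine) is the same as the paper's, but two of your steps have genuine gaps. First, the structural claim about $A_n/I_n$ is too strong: a closed set $K_i\subset[0,1]$ need not split as finitely many interval components plus a totally disconnected residue sitting in separate direct summands --- think of $\{0\}\cup\bigcup_n[\tfrac{1}{2n+1},\tfrac{1}{2n}]$, where infinitely many intervals accumulate at a point, so $C(K_i,M_{l_i})$ is not a direct sum over components at all. Moreover, when $Y\cap[y_s,y_t]_i$ is an infinite countable set the quotient is not even locally a matrix algebra over an interval; the paper has to collapse such pieces to points and, in the uncountable case, use the monotone surjection of Lemma \ref{surjmap} onto a genuine interval (the Case 1/Case 2 dichotomy in \ref{deff} and Lemma \ref{jihu}). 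Your ``resolution into $\bigoplus\mathcal{C}$'' skips exactly this step.

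Second, and more seriously, the step you call ``routine'' --- producing the injective connecting maps between the approximants --- is the technical heart of the paper. The restriction of $\bar\phi_{n,n+1}$ to your $B_n^{(r_n)}$ does not land in $B_{n+1}^{(r_{n+1})}$, so it must be perturbed; but a small perturbation of an injective homomorphism need not be injective, since its spectrum can simply miss a small ball in $\coprod_i(0,1)_i$. Lemma \ref{pair1} only pairs spectra within $2\eta$; it does not by itself yield a nearby \emph{injective} homomorphism into the subalgebra. The paper's Theorem \ref{mainthrm}, built on Lemmas \ref{ptb1} and \ref{ptb2}, exists precisely to close this gap: the perturbation is realized as a path of homomorphisms whose spectra are forced to sweep out the closed balls $\overline{B_{4\eta_1}(y)}$, and the new map $\psi$ is defined by these path values on the edges of $Z$, which is what guarantees $Sp\,\psi=Sp(A)$. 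Without an argument of this kind your diagonal extraction produces maps that are close to injective ones but need not be injective, so the proposal as written does not prove the theorem.
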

\begin{lem}[\cite{Li1}]\label{surjmap}
  Let $Y\subset [0,1]$ be a closed subset containing uncountably many points. Then
  there exists a surjective non-decreasing continuous map
  $$
  \rho: Y\rightarrow [0,1].
  $$
\end{lem}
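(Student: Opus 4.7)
The plan is to collapse the gaps of $Y$ inside $[0,1]$ to single points and then identify the resulting quotient with $[0,1]$. Concretely, $[0,1]\setminus Y$ is a relatively open subset of $[0,1]$ and hence decomposes as a countable disjoint union of open intervals $\{(a_n,b_n)\}_n$, the gaps of $Y$. I will construct a continuous non-decreasing surjection $\widetilde\rho\colon[0,1]\to[0,1]$ that is constant on each such gap, and then set $\rho=\widetilde\rho|_Y$. Surjectivity of $\rho$ onto $[0,1]$ will then be automatic: for an interior gap both endpoints lie in $Y$, and for a boundary gap (case $0\notin Y$ or $1\notin Y$) the constant value on the gap is still attained at $\min Y$ or $\max Y$ respectively.

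I will realize $\widetilde\rho$ as the cumulative distribution function $\widetilde\rho(x)=\mu([0,x])$ of a Borel probability measure $\mu$ on $[0,1]$ that is supported on $Y$ and has no atoms. Support on $Y$ forces $\widetilde\rho$ to be constant on each gap; atomlessness makes $\widetilde\rho$ continuous; and $\widetilde\rho(0)=0$ together with $\widetilde\rho(1)=\mu(Y)=1$ and continuity give surjectivity of $\widetilde\rho$ onto $[0,1]$ by the intermediate value theorem. Non-decreasingness is built in.

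The crux is thus to produce such a $\mu$. Here I will use that $Y$ is closed and uncountable: by the Cantor--Bendixson theorem, $Y$ decomposes as $P\cup C$ with $P$ a non-empty perfect compact subset and $C$ countable. Any non-empty perfect compact metric space carries an atomless Borel probability measure --- for example, $P$ contains a homeomorphic copy of the standard Cantor set, onto which one may push the symmetric Bernoulli measure and then extend by zero to a measure on $[0,1]$. This yields the required $\mu$.

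The main obstacle is obtaining the atomless measure, which is a standard but non-trivial descriptive-set-theoretic input; the alternative is a direct Cantor-function-style construction indexing the gaps by binary strings, which is more hands-on but longer. The remaining verifications are routine: the CDF is non-decreasing by monotonicity of $\mu$, continuous because $\mu$ has no atoms, constant on each gap because $\mu([0,1]\setminus Y)=0$, and $\widetilde\rho|_Y$ is surjective by the endpoint argument sketched in the first paragraph.
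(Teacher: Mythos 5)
Your proof is correct. Note that the paper itself gives no argument for this lemma --- it is quoted verbatim from [Li1] --- so there is nothing internal to compare against; what you have supplied is a complete, self-contained proof of the cited fact. Your route (extract the perfect kernel $P\subset Y$ via Cantor--Bendixson, embed a Cantor set in $P$, push forward the Bernoulli measure to get an atomless probability measure $\mu$ with $\mu([0,1]\setminus Y)=0$, and take $\rho$ to be the restriction to $Y$ of the CDF $x\mapsto\mu([0,x])$) packages all the pointwise verifications into standard properties of distribution functions: monotonicity, continuity from atomlessness, constancy on gaps from the support condition, and surjectivity of the restriction via the endpoint argument, which you handle correctly including the boundary components $[0,\min Y)$ and $(\max Y,1]$. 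The construction in [Li1] is instead the hands-on Cantor-function-style one you mention as an alternative, assigning dyadic values to a nested scheme of gaps; that version avoids invoking Cantor--Bendixson and the existence of atomless measures on perfect sets, at the cost of more bookkeeping. The only inputs you take for granted --- the Cantor--Bendixson decomposition, the fact that a nonempty perfect compact metric space contains a homeomorphic copy of $2^{\mathbb N}$, and that an injective continuous pushforward of an atomless measure is atomless --- are all standard and correctly applied, so there is no gap.
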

\begin{ddef}\label{deff}
  Let $A=A(F_1,F_2,\varphi_0,\varphi_1)\in \mathcal{C}$ be minimal, the topology base on
  $$
  Sp(A)=\{\theta_1,\theta_2,\cdots,\theta_p\}\cup \coprod_{i=1}^{l}(0,1)_i
  $$
  at each point $\theta_j$ is given by
  $$
  \{\theta_j\}\cup\coprod_{\{i|\alpha_{ij}\neq0\}}(0,\varepsilon)_i\cup\coprod_{\{i|\beta_{ij}\neq0\}}(1-\varepsilon,1)_i.
  $$
  In general, this is a non Hausdorff topology.

  For closed subset $Y\subset Sp(A)$ and $\delta>0$, we will construct a space $Z$ and a continuous surjective map $\rho:Y\rightarrow Z$ such that $Z\cap (0,1)_i$ is a union of finitely many intervals for each $i\in \{1,2,\cdots,l\}$, and
  $dist(\rho(y),y)<\delta$ for all $y\in Y$. We can find a similar discussion in an old version of \cite{GL}.

  For any closed subset $Y\subset Sp(A)$, define index sets
  $$
  J_Y=\{j\,|\,\theta_j\in Y\},
  $$
  $$
  L_{0,Y}=\{i\,|\,(0,1)_i\cap Y=\varnothing\},
  $$
  $$
  L_{1,Y}=\{i\,|\,(0,1)_i\subset Y\},
  $$
  $$
  L_{l,Y}=\{i\,|\,i\notin L_{1,Y}\,\,and\,\,\exists\,s >0\,\,such\,\,that\,\,(0,s]_i\subset Y\},
  $$
  $$
  L_{ll,Y}=\{i\,|\,i\notin L_{1,Y}\cup L_{l,Y}\,\,and\,\,\exists\,\{y_n\}_{n=1}^{\infty}\subset (0,1)_i\cap Y\,\,such\,\,that\,\,\lim_{n\rightarrow\infty}y_n=0_i\},
  $$
  $$
  L_{r,Y}=\{i\,|\,i\notin L_{1,Y}\,\,and\,\,\exists \,t >0\,\,such\,\,that\,\,[1-t,1)_i\subset Y\},
  $$
  $$
  L_{rr,Y}=\{i\,|\,i\notin L_{1,Y}\cup L_{r,Y}\,\,and\,\,\exists\, \{y_n\}_{n=1}^{\infty}\subset (0,1)_i\cap Y\,\,such\,\,that\,\,\lim_{n\rightarrow\infty}y_n=1_i\},
  $$
  $$
  L_{a,Y}=\{i\,|\,i\notin L_{0,Y}\cup L_{1,Y}\}.
  $$
  Then we have
  $$
  L_{l,Y}\cup L_{ll,Y}\cup L_{r,Y}\cup L_{rr,Y}\subset L_{a,Y},
  $$
  $$
  L_{0,Y}\cup L_{1,Y} \cup L_{a,Y}=\{1,2,\cdots,l\}.
  $$

  Consider $Y\subset Sp(A)$, if $i\in L_{1,Y}\cup L_{l,Y}\cup L_{ll,Y}$, assume that $(0,i)\in Y$ and if $i\in L_{1,Y}\cup L_{r,Y}\cup L_{rr,Y}$, assume that $(1,i)\in Y$. For $\delta>0$, there exists $m\in \mathbb{N}_+$ such that $\frac{1}{m}<\frac{\delta}{2}$. Denote $Y_i=Y\cap [0,1]_i$, $i\in \{1,2,\cdots,l\}$, then we can construct a collection of finitely many points $\hat{Y}_i=\{y_1,y_2,\cdots\}\subset Y_i$ as below.

  (a). If $i\in L_{0,Y}$, let $\hat{Y}_i=\varnothing$;

  (b). If $i\in L_{1,Y}$, let $\hat{Y}_i=\{(0,i),(\frac{1}{m},i),\cdots,(1,i)\}$;

  (c). For each $i\in L_{a,Y}$, consider the set $Y_i\cap[\frac{r-1}{m},\frac{r}{m}]_i$, if $Y_i\cap[\frac{r-1}{m},\frac{r}{m}]_i\neq \varnothing$, then set
  $$
  x_i^r=\min\{x\,|\,x\in Y_i\cap[\frac{r-1}{m},\frac{r}{m}]_i\},
  $$
  $$
  \widetilde{x}_i^r=\max\{x\,|\,x\in Y_i\cap[\frac{r-1}{m},\frac{r}{m}]_i\}.
  $$
  Assume that $Y_i\cap[\frac{r-1}{m},\frac{r}{m}]_i\neq \varnothing$ iff $r\in \{r_1,r_2,\cdots,r_{\bullet}\}\subset \{1,2,\cdots,m\}$, then we have a finite set
  $$
  \{x_i^{r_1},\widetilde{x}_i^{r_1},x_i^{r_2},\cdots,x_i^{r_{\bullet}},
  \widetilde{x}_i^{r_{\bullet}}\}.
  $$
  Some of the points may be the same, we can delete the extra repeating points, and denote it by $\hat{Y}_i$.

   Denote $\hat{Y}=\coprod_{i=1}^l\hat{Y}_i$. Two points $(y_s,i),(y_t,i')\in\hat{Y}$ are said to be \textbf{adjacent}, if $(y_s,i),(y_t,i')$ are in the same interval (the case $i=i'$), and inside the open interval $(y_s,y_t)_i$, there is no other point in $\hat{Y}$. Note that if $\{(y_s,i),(y_t,i)\}$ is an adjacent pair and $(y_s,y_t)_i\cap Y\neq\varnothing$, then $dist((y_s,i),(y_t,i))<\delta$, and for any $y\in Y\cap\coprod_{i=1}^l[0,1]_i$, there exists $y'\in \hat{Y}$ such that $dist(y,y')<\delta$.

   It is obvious that $Y_i$ can be written as the union of $[y_s,y_t]_i\cap Y_i$, where $\{(y_s,i),$ $(y_t,i)\}$ runs over all adjacent pairs. We will define a space $Z$ and a
  continuous surjective map $\rho:Y\rightarrow Z$ as follows (see also \cite{Li1}).

  First, $Y\cap Sp(F_1)\subset Z$ and $Z$ contains a collection of finitely many  points $P(Z)=\{z_1,z_2,\cdots\}$, each $(z_s,i)\in P(Z)$ corresponding to one and only one
  $(y_s,i)\in \hat{Y}$. To define the edges of $Z$, we consider an
  adjacent pair $\{(y_s,i),(y_t,i)\}$. There are the following two cases.

  Case 1: If $[y_s,y_t]_i\cap Y$ has uncountably many points, then we let $Z$
    contain $[z_s,z_t]_i$, the line segment connecting $(z_s,i),(z_t,i)$. By Lemma \ref{surjmap}, there exists a non-decreasing surjective map
    $\rho:[y_s,y_t]_i\cap Y\rightarrow [z_s,z_t]_i$ such that $\rho((y_s,i))=(z_s,i), \rho((y_t,i))=(z_t,i)$. (Here both $[y_s,y_t]_i$ and $[z_s,z_t]_i$ are identified with interval $[0,1]$.)

  Case 2: If $[y_s,y_t]_i\cap Y$ has at most countably many points, then it is defined that there is no edge connecting $(z_s,i)$ and $(z_t,i)$. Since $[y_s,y_t]_i\cap Y$ is a
  countable closed subset of $[y_s,y_t]_i$, there exists an open interval
  $(y_s',y_t')_i\subset (y_s,y_t)_i$ such that $(y_s',y_t')_i\cap Y=\varnothing$.
  Let $\rho:[y_s,y_t]_i\cap Y\rightarrow \{(z_s,i),(z_t,i)\}$ be defined by
  $$
  \rho(y)=
  \begin{cases}
    (z_s,i), & \mbox{if } y\in [y_s,y_s']_i\cap Y  \\
    (z_t,i), & \mbox{if } y\in [y_t',y_t]_i\cap Y
  \end{cases}.
  $$

  By the above procedure for all adjacent pairs, we obtain a space $Z$ which satisfys that $Z\cap (0,1)_i$ is a union of finitely many intervals for each $i\in \{1,2,\cdots,l\}$.

  Notice that $\rho$ is defined on each $[y_s,y_t]_i\cap Y$ piece by piece, and $\rho((y_s,i))=(z_s,i)$ for each $s,i$, the definitions of $\rho$ on different pieces are consistent. Then we obtain a surjective map $\rho: Y\cap(0,1)_i \rightarrow Z\cap(0,1)_i$. Let $\rho: Y\cap Sp(F_1)\rightarrow Z\cap Sp(F_1)$ be defined by
  $\rho(\theta_j)=\theta_j$ for all $j\in J$.

  Then we obtain a surjective map $\rho:Y\rightarrow Z$, and we have $dist(\rho(y),y)<\delta$ for all $y\in Y$.
\end{ddef}
\begin{ddef}\label{adjj}
For any closed subset $X\subset Sp(A)$, denote that $A|_X=\{f|_{X}\,|\,f\in A\}$. For the ideal $I\subset A$, there exists a closed subset
  $Y\subset Sp(A)$ such that $I=\{f\in A\,|\,f|_{Y}=0\}$. Then $A/I\cong A|_{Y}$.
\end{ddef}
\begin{lem}\label{jihu}
  Let $A\in \mathcal{C}$ be minimal, $\varepsilon>0$, $Y\subset Sp(A)$ be a closed subset, $G\subset A|_Y$ be a finite subset. Suppose that $\delta>0$
  satisfys that, $dist(y,y')<\delta$ implies that $\|g(y)-g(y')\|<\varepsilon$ for all $g\in G$. Then there exists a closed subset $Z\subset Sp(A)$ and a surjective map $\rho:Y\rightarrow Z$ such that $A|_Z\in \mathcal{C}$ and $G\subset_{\varepsilon}A|_Z$, where $A|_Z$ is considered as a subalgebra of $A|_Y$ by the inclusion $\rho^*:A|_Z\rightarrow A|_Y$.
\end{lem}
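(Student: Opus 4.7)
The plan is to take $\delta$ as given and run the construction of Definition \ref{deff} on the pair $(Y,\delta)$. That construction directly produces a closed subset $Z\subset Sp(A)$ and a continuous surjection $\rho\colon Y\to Z$ such that $Z\cap(0,1)_i$ is a finite union of closed intervals for each $i$, and $dist(\rho(y),y)<\delta$ for every $y\in Y$; since $\rho$ is surjective, the pullback $\rho^*\colon A|_Z\to A|_Y$ is automatically an injective $*$-homomorphism.

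The first task is to identify $A|_Z$ as a member of $\mathcal{C}$. The idea is to present $A|_Z\cong A(F_1',F_2',\varphi_0',\varphi_1')$ by taking $F_1'$ to be the sum of the $M_{k_j}(\mathbb{C})$ with $\theta_j\in Y$, augmented by one fresh copy of $M_{l_i}(\mathbb{C})$ for each endpoint of an interval component of $Z\cap(0,1)_i$ that lies strictly inside $(0,1)_i$; taking $F_2'$ to consist of one copy of $M_{l_i}(\mathbb{C})$ per interval component of $Z\cap(0,1)_i$; and defining $\varphi_0',\varphi_1'$ as the induced attaching maps, inherited from $\varphi_0,\varphi_1$ at endpoints equal to $(0,i)$ or $(1,i)$ and given by the identity onto the new $F_1'$ summand at interior endpoints. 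This repackaging is the main obstacle: the combinatorics of which endpoints $(y_s,i)\in\hat{Y}_i$ sit at $(0,i)$ or $(1,i)$ versus strictly inside $(0,1)_i$ dictates the structure of $F_1',F_2',\varphi_0',\varphi_1'$, and matching these up correctly is the only non-trivial step.

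The second task is approximation. For $g\in G$, construct $g'\in A|_Z$ vertex by vertex and interval by interval: at each $\theta_j$-vertex set $g'(\theta_j)=g(\theta_j)$; at each new interior-endpoint vertex corresponding to $(y_s,i)\in\hat{Y}_i$, set $g'((z_s,i))=g((y_s,i))$. On each interval $[z_s,z_t]_i$ produced by Case 1 of Definition \ref{deff}, choose a fine partition $z_s=w_0<w_1<\cdots<w_N=z_t$, pick preimages $v_k\in\rho^{-1}(w_k)\cap Y\subset[y_s,y_t]_i$ with $v_0=(y_s,i)$ and $v_N=(y_t,i)$, declare $g'(w_k)=g(v_k)$, and linearly interpolate. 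The resulting $g'$ is continuous on $Z$ and matches the prescribed values at every vertex of $F_1'$, so $g'\in A|_Z$.

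For the estimate: by Definition \ref{deff}, each fiber of $\rho$ has diameter less than $\delta$ (Case 1 bounds the fiber by $y_t-y_s<\delta$; Case 2 either yields a singleton fiber when $[y_s,y_t]_i\cap Y$ consists only of its endpoints, or a fiber contained in a subinterval of length less than $\delta$). Hence for any $y\in Y$ and any preimage $v_k$ in the same fiber, $\|g(y)-g(v_k)\|<\varepsilon$ by hypothesis. Since $\rho^*(g')(y)=g'(\rho(y))$ is at worst a convex combination of two such values $g(v_k)$, one obtains $\|\rho^*(g')(y)-g(y)\|<\varepsilon$; Case 2 produces exact matches at singleton fibers. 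Taking the supremum over $y\in Y$ gives $\|g-\rho^*(g')\|_{A|_Y}<\varepsilon$, hence $G\subset_\varepsilon A|_Z$.
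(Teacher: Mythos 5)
Your overall route is the paper's: run the construction of Definition \ref{deff} with the given $\delta$, pull back along $\rho$, and repackage $A|_Z$ as an Elliott--Thomsen algebra. The problem is that the repackaging is the bulk of the paper's proof, and you correctly flag it as ``the only non-trivial step'' but then do not carry it out. Concretely, the one point that requires an argument is the well-definedness of your ``induced attaching maps.'' At an endpoint of a component of $Z\cap(0,1)_i$ sitting at $(0,i)$, the attaching map you want is $\varphi_0^i$; but $\varphi_0^i$ is defined on all of $F_1$, while your $F_1'$ contains only the summands $M_{k_j}(\mathbb{C})$ with $\theta_j\in Y$. You must check that $\varphi_0^i$ kills the summands with $\theta_j\notin Y$, i.e.\ that $\alpha_{ij}\neq 0$ together with $(0,i)$ being a limit of $Y\cap(0,1)_i$ forces $\theta_j\in Y$. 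This is precisely where the closedness of $Y$ in the non-Hausdorff topology on $Sp(A)$ enters (every basic neighbourhood of $\theta_j$ contains $(0,\varepsilon)_i$ when $\alpha_{ij}\neq 0$), and it is the content of the paper's identity $\varphi_0^i(a)=\varphi_0^i(\pi(a))$ for $i\in L_{1,Z}\cup L_{l,Z}$ (similarly for $\varphi_1$). Without this the maps $\varphi_0',\varphi_1'$ are not defined on $F_1'$. The paper's explicit $E_1,E_2,\psi_0,\psi_1$, together with the splitting $Z=Z_1\sqcup Z_2$ that peels off the interior intervals as interval or matrix algebras, is the worked-out version of what you only describe; as it stands your first task is a statement of the goal, not a proof.

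Your approximation step does differ from the paper's, which simply estimates $\|g(\rho(y))-g(y)\|$ from $dist(\rho(y),y)<\delta$; your interpolant $g'$ only ever samples $g$ at points of $Y$, which is in fact cleaner (it avoids having to evaluate $g\in A|_Y$ on $Z\setminus Y$). But the justification via ``each fiber of $\rho$ has diameter less than $\delta$'' is both inaccurate (a fiber over a vertex $z_s$ can collect pieces from the adjacent pairs on both sides of $y_s$) and not what your estimate actually uses: the sample points $v_k\in\rho^{-1}(w_k)$ are generally not in the same fiber as $y$. The statement you need is that for a Case 1 pair the whole interval $[y_s,y_t]_i$ has length $<\delta$ (which holds because $(y_s,y_t)_i\cap Y\neq\varnothing$ there), so that $y$ and every $v_k$ lie within $\delta$ of each other, and for Case 2 that every point of $[y_s,y_s']_i\cap Y$ lies within $\delta$ of the anchor $y_s$ whose value defines $g'(z_s)$. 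With those corrections the convex-combination bound goes through and gives $G\subset_{\varepsilon}A|_Z$.
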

\begin{proof}
  For closed subset $Y\subset Sp(A)$ and $\delta>0$, we can construct $Z$ and $\rho$ as in \ref{deff}.
  The surjective map $\rho:Y\rightarrow Z$ induces a homomorphism
  $$
  \rho^*:A|_Z\rightarrow A|_Y,
  $$
  $$
  (\rho^*(g))(y)=g(\rho(y)),\quad \forall y\in Y.
  $$
  Then we have
  $$
  \|\rho^*(g)-g\|=\max_{y\in Y}\|g(y)-g(\rho(y))\|<\varepsilon
  $$
  for any $g\in G$, then $G\subset_{\varepsilon}A|_Z$.

  We need to verify $A|_Z\in \mathcal{C}$. Define index sets for $Z$, we will have
  $$
  J_Z=J_Y,\,\,\,L_{0,Z}=L_{0,Y},
  $$
  $$
  L_{1,Z}\supset L_{1,Y},\,\,\,
  L_{ll,Z}=L_{rr,Z}=\varnothing.
  $$
  We will define positive numbers $s_i$ for all $i\in L_{l,Z}$, positive numbers $t_i$ for all $i\in L_{r,Z}$, and positive numbers $a_i<b_i$ for all $i\in L_{a,Z}$ to satisfy that $s_i<a_i<b_i$ (if $i\in L_{l,Z}$) and $a_i<b_i<t_i$ (if $i\in L_{r,Z}$) as below.

  For $i\in L_{l,Z}$, let $s_i=\max\{s\,|\,(0,s]_i\subset Z\}$. For $i\in L_{r,Z}$, let $t_i=\min\{t\,|\,[t,1)_i\subset Z\}$. Note that if $i\in L_{l,Z}\cap L_{r,Z}$, then $s_i<t_i$.


  For $i\in L_{l,Z}$, choose $a_i$ with $s_i<a_i<1$ such that $(s_i,a_i)_i\cap Y=\varnothing$. For $i\in L_{a,Z}\backslash L_{l,Z}$, choose $a_i$ with $0<a_i<\delta$ such that $(0,a_i)_i\cap Y=\varnothing$ (we don't need to define $s_i$ at this case).
  Evidently the numbers $a_i$ satisfies that $a_i<t_i$ provided $i\in L_{r,Z}$.

  For $i\in L_{r,Z}$, choose $b_i$ with $a_i<b_i<t_i$ such that $(b_i,t_i)_i\cap Y=\varnothing$. For $i\in L_{a,Z}\backslash L_{r,Z}$, choose $b_i$ with $b_i>1-\delta$ such that $(b_i,1)_i\cap Y=\varnothing$ (we don't need to define $t_i$ in this case).

  Define closed subsets of $Sp(A)$ as below:
  $$
  Z_1=\coprod_{i\in L_{a,Z}}[a_i,b_i]_i,
  $$
  $$
  Z_2=\{\theta_j,\,j\in J\}\cup\coprod_{i\in L_{1,Z}}(0,1)_i\cup\coprod_{i\in L_{l,Z}}(0,s_i]_i\cup\coprod_{i\in L_{r,Z}}[t_i,1)_i,
  $$
  Then $Z_1\cap Z_2=\varnothing$ and $Z\subset Z_1\cup Z_2$, we have
  $A|_Z\cong A|_{Z_2}\oplus A|_{Z_1}$, where $A|_{Z_1}$ is a direct sum of matrices over interval algebras or matrix algebras.

  Now we consider $A|_{Z_2}$, for each $i\in L_{l,Z}$, we denote $F_2^i=M_{l_i}(\mathbb{C})$ by $F_{2,l}^i$; and for each $i\in L_{r,Z}$, we denote
  $F_2^i=M_{l_i}(\mathbb{C})$ by $F_{2,r}^i$.
  Let
  $$
  E_1=\bigoplus_{j\in J_Z}F_1^j\oplus\bigoplus_{i\in L_{l,Z}}F_{2,l}^i\oplus\bigoplus_{i\in L_{r,Z}}F_{2,r}^i
  $$
  $$
  E_2=\bigoplus_{i\in L_{1,Z}}F_2^i\oplus\bigoplus_{i\in L_{l,Z}}F_{2,l}^i\oplus\bigoplus_{i\in L_{r,Z}}F_{2,r}^i.
  $$

  Written $a\in F_1$ by $a=(a(\theta_1),a(\theta_2),\cdots,a(\theta_p))$. Define $\pi:F_1\rightarrow F_1$ by
  $$
  \pi(a)=a'=(a'(\theta_1),a'(\theta_2),\cdots,a'(\theta_p)),
  $$
  where
  $$
  a'(\theta_j)=
  \begin{cases}
    a(\theta_j), & \mbox{if } j\in J_Z \\
    0_{k_j}, & \mbox{if } j\notin J_Z.
  \end{cases}
  $$
  Then there exist a natural inclusion $\iota$ and a projection $\iota^*$
  such that
  $$
  \iota\circ\iota^*=\pi:F_1\rightarrow F_1,
  $$
  $$
  \iota^*\circ\iota=id: \bigoplus_{j\in J_Z}F_1^j\rightarrow \bigoplus_{j\in J_Z}F_1^j.
  $$
  Then we have if $i\in L_{1,Z}\cup L_{l,Z}$, then
  $\varphi_0^i(a)=\varphi_0^i(\pi(a))$ for any $a\in F_1$,
  and if $i\in L_{1,Z}\cup L_{r,Z}$, then
  $\varphi_1^i(a)=\varphi_1^i(\pi(a))$ for any $a\in F_1$.

  Let $\psi_0:E_1\rightarrow E_2$ be defined as follows:

  (1). For the part $\mathop{\bigoplus}\limits_{j\in J_Z}F_1^j$ in $E_1$, the partial map of $\psi_0$
  is defined to be
  $$
  \bigoplus_{i\in L_{1,Z}}\varphi_0^i\circ\iota\oplus\bigoplus_{i\in L_{l,Z}}\varphi_0^i\circ\iota\oplus\bigoplus_{i\in L_{r,Z}}0
  $$

  (2). For the part $\mathop{\bigoplus}\limits_{i\in L_{l,Z}}F_{2,l}^i$ in $E_1$, the partial map of $\psi_0$ is zero;

  (3). For the part $\mathop{\bigoplus}\limits_{i\in L_{r,Z}}F_{2,r}^i$ in $E_1$, the partial map of $\psi_0$ is defined to be
  $$
  \bigoplus_{i\in L_{1,Z}}0\oplus\bigoplus_{i\in L_{l,Z}}0\oplus\bigoplus_{i\in L_{r,Z}}id_i
  $$
  where $id_i$ ($i\in L_{r,Z}$) is the identity map from $M_{l_i}(\mathbb{C})$ to $M_{l_i}(\mathbb{C})$.

  Similarly, let $\psi_1:E_1\rightarrow E_2$ be defined as follows:

   (1). For the part $\mathop{\bigoplus}\limits_{j\in J_Z}F_1^j$ in $E_1$, the partial map of $\psi_1$ is defined to be
  $$
  \bigoplus_{i\in L_{1,Z}}\varphi_1^i\circ\iota\oplus\bigoplus_{i\in L_{l,Z}}0\oplus\bigoplus_{i\in L_{r,Z}}\varphi_1^i\circ\iota;
  $$

  (2). For the part $\mathop{\bigoplus}\limits_{i\in L_{l,Z}}F_{2,l}^i$ in $E_1$, the partial map of $\psi_0$ is defined to be
  $$
  \bigoplus_{i\in L_{1,Z}}0\oplus\bigoplus_{i\in L_{l,Z}}id_i\oplus\bigoplus_{i\in L_{r,Z}}0;
  $$
  where $id_i$ ($i\in L_{l,Z}$) is the identity map from $M_{l_i}(\mathbb{C})$ to $M_{l_i}(\mathbb{C})$.

  (3). For the part $\mathop{\bigoplus}\limits_{i\in L_{r,Z}}F_{2,r}^i$ in $E_1$, the partial map of $\psi_0$ is zero.

  Evidently $A|_{Z_2}\cong B(E_1,E_2,\psi_0,\psi_1)\in \mathcal{C}$, then we have $A|_Z\in \mathcal{C}$.
\end{proof}
  Using some similar techniques in \cite{Su}, we will have some perturbation results.
\begin{lem}\label{ptb1}
  Let $A=A(F_1,F_2,\varphi_0,\varphi_1)\in \mathcal{C}$ be minimal, $B=M_n(\mathbb{C})$, $F\subset A$ be a finite subset. Given $1>\varepsilon>0$, there exist $\eta,\varepsilon'>0$
   such that,  if $\phi,\psi:A\rightarrow B$ are unital homomorphisms satisfy the following conditions:

  (1) $Sp\phi=Sp\psi$;

  (2) $\|\phi(h)-\psi(h)\|<\varepsilon'$, $\forall$ $h\in H(\eta)\cup \widetilde{H}(\eta)$,

  then there is a continuous path of homomorphisms $\phi_t:A\rightarrow B$ such that $\phi_0=\phi$, $\phi_1=\psi$ and
  $$
  \|\phi_t(f)-\phi(f)\|<\varepsilon
  $$
  for all $f\in F$, $t\in [0,1]$.
\end{lem}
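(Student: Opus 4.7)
The plan is to exploit the standard form of homomorphisms from $A$ into $M_n(\mathbb{C})$ (Definition~\ref{mappss}): since $Sp\phi = Sp\psi$, the two homomorphisms are unitarily conjugate, so there exists a unitary $w\in U(n)$ with $\psi = \mathrm{Ad}(w)\circ\phi$. Under this identification, hypothesis~(2) becomes
$$\|[w,\phi(h)]\| = \|\psi(h)-\phi(h)\| < \varepsilon'\quad \text{for all } h\in H(\eta)\cup\widetilde{H}(\eta).$$
So the problem reduces to: given a unitary $w$ approximately commuting with the images of the test functions, construct a continuous path of unitaries $w_t$ from $1$ to $w$ such that $\mathrm{Ad}(w_t)\circ\phi$ stays uniformly $\varepsilon$-close to $\phi$ on $F$.

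The key technical step is to show that for every prescribed $\varepsilon''>0$, one can choose $\eta$ and $\varepsilon'$ small enough so that any such $w$ lies within $\varepsilon''$ of the exact commutant: there is a unitary $w_0\in\phi(A)'$ with $\|w-w_0\|<\varepsilon''$. This rests on two observations. First, for $\eta$ sufficiently small, $\phi(H(\eta)\cup\widetilde{H}(\eta))$ generates $\phi(A)$ as a $C^*$-subalgebra of $M_n(\mathbb{C})$: the type-1 test functions promoted with matrix units $f_{ss'}^j$ in $\widetilde{H}(\eta)$ capture the $F_1$-summand and its boundary extensions, while the type-2 functions multiplied by matrix units $e_{ss'}^i$ capture the open-interval point evaluations appearing in $Sp\phi$. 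Second, since $U(n)$ is compact and the exact commutant $U(\phi(A)')$ is the zero-locus of the continuous function $w\mapsto \max_{h}\|[w,\phi(h)]\|$ on a finite test set, a standard compactness argument yields the quantitative approximation. This is exactly where techniques analogous to those of \cite{Su} enter.

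With such a $w_0$ in hand, write $w = w_0 e^{ih}$ with $h = -i\log(w_0^*w)$ self-adjoint and $\|h\|\leq 2\varepsilon''$, and define
$$\phi_t = \mathrm{Ad}(w_0 e^{ith})\circ\phi,\quad t\in[0,1].$$
Because $w_0\in\phi(A)'$ we have $\phi_0 = \phi$ and $\phi_1 = \mathrm{Ad}(w)\circ\phi = \psi$, and for every $f\in F$,
$$\|\phi_t(f)-\phi(f)\| = \|e^{ith}\phi(f)e^{-ith}-\phi(f)\| \leq 2\|h\|\max_{f\in F}\|f\|,$$
which is $<\varepsilon$ once $\varepsilon''$ is taken smaller than $\varepsilon/(4\max_{f\in F}\|f\|)$. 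The main obstacle is the middle step: making the assertion \emph{approximate commutant is close to exact commutant} quantitative and $\eta$-uniform enough to yield the universal $\eta,\varepsilon'$ demanded by the conclusion, and tracking how the bounds depend on the fixed algebra $A$ rather than on the particular pair $\phi,\psi$.
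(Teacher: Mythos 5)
The central step of your argument — that a unitary $w$ which approximately commutes with $\phi(H(\eta)\cup\widetilde H(\eta))$ must be within $\varepsilon''$ of the exact commutant $U(\phi(A)')$ — is the point where the proof breaks. The compactness argument you invoke can only produce an $\varepsilon'$ that depends on the particular $\phi$: as $\phi$ varies, $U(\phi(A)')$ is only upper semicontinuous (it jumps up when spectrum points of $\phi$ collide), so there is no limit to pass to when a minimizing sequence of pairs $(\phi_k, w_k)$ degenerates. Concretely, already for $A=C[0,1]$ and $B=M_2(\mathbb C)$, take $\phi_\delta$ to be evaluation at $\{\tfrac12-\delta,\ \tfrac12+\delta\}$ and $\psi_\delta=\mathrm{Ad}(w)\circ\phi_\delta$ with $w$ the order-$2$ flip. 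For every $\eta$ and every test function $h$ one has $\|[w,\phi_\delta(h)]\|\to 0$ as $\delta\to 0$, yet $w$ stays at fixed positive distance from $U(\phi_\delta(A)')$, which is the diagonal torus. So no choice of $\eta,\varepsilon'$ makes your conclusion true: $w$ cannot be written as $w_0e^{ih}$ with $w_0\in\phi(A)'$ and $\|h\|$ small, and the path $\phi_t=\mathrm{Ad}(w_0e^{ith})\circ\phi$ you propose does not exist. (The lemma itself remains true in this example, but the required path rotates the two eigenspaces around each other, and this path leaves any small ball around the commutant of $\phi$.)

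The paper resolves exactly this difficulty by inserting an intermediate, discretized homomorphism $\phi''$, obtained from $\phi'$ by collapsing each spectrum point into a nearby grid point in a $2\eta$-gap of $Sp\phi$. Collisions of spectrum points are thereby forced, and the commutant of $\phi''$ is a genuine, $\phi$-independent block-diagonal algebra; moreover $\phi'$ and $\phi''$ agree on the carefully chosen subfamily $G\cup\widetilde G$ of test functions, and $\phi'$ is close to $\phi''$ on $F$ by the modulus-of-continuity choice of $m$. The approximately commuting unitary $S$ is then shown to lie near the honest commutant of $\phi''$, and the polar decomposition $D=|D^*|O$ is taken there. The resulting path is built in three stages on $[0,\tfrac13],[\tfrac13,\tfrac23],[\tfrac23,1]$: two short unitary corrections near the identity and one exponential path inside the commutant of $\phi''$. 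Your sketch skips this discretization, which is the actual content of the lemma; without it there is no way to obtain $\eta,\varepsilon'$ that are uniform over all admissible pairs $(\phi,\psi)$.

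A secondary issue is the claim that $\phi(H(\eta)\cup\widetilde H(\eta))$ generates $\phi(A)$ for $\eta$ small: even for a fixed $\phi$ this requires $\eta$ smaller than the spacing of $Sp\phi$, and $\eta$ must be chosen before $\phi$ is seen, so the claim fails for the same reason. This too is dissolved by the discretization, since the relevant target becomes the commutant of $\phi''$ rather than of $\phi$.
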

\begin{proof}
  Without loss of generality, we may suppose that for each $f\in F$, $\|f\|\leq 1$. Since $F\subset A$ is a finite set, there exists an integer $m>0$ such that for any $dist(x,x')<\frac{2}{m}$, $\|f(x)-f(x')\|<\frac{\varepsilon}{2}$
  holds for all $f\in F$, and $\varepsilon'$ will be specified later.  Set $\eta=\frac{1}{2mn}$, then we have finite subsets
  $H(\eta)$ and $\widetilde{H}(\eta)$.

  There exist unitaries $U,V$ such that
  $$
  \phi(f,a)=U^*\phi'(f,a) U,
  \quad
  \psi(f,a)=V^*\phi'(f,a) V.
  $$
  here we denote $\phi':A\rightarrow B$ by
  $$
  \phi'(f,a)={\rm diag}\big( a(\theta_1)^{\sim t_1},\cdots,a(\theta_p)^{\sim t_p},f(x_1),f(x_2),\cdots,f(x_{\bullet})\big)
  $$
  where $x_1,x_2,\cdots\in \coprod_{i=1}^{l}(0,1)_i$.

  Divide $(0,1)_i$ into $2mn$ intervals of equal length $\frac{1}{2mn}$, for each sub-interval $(\frac{k-1}{m},\frac{k}{m})_i$, $k=1,2,\cdots,m$, there exist an integer $a_k^i$ such that
  $$
  (a_k^i\eta,a_k^i\eta+2\eta)_i\subset (\frac{k-1}{m},\frac{k}{m})_i\,\,\, {\rm and}\,\,\,
  (a_k^i\eta,a_k^i\eta+2\eta)_i\cap Sp\phi=\varnothing.
  $$
  Then we have
  $$
  Sp\phi'=Sp\phi'\cap \coprod_{i=1}^l \big([0,a_1^i\eta]_i\cup[a_m^i\eta+2\eta,1]_i\cup\bigcup_{k=1}^{m-1}[a_k^i\eta+2\eta,a_{k+1}^i\eta]_i\big).
  $$

  For each $X_j=\{\theta_j\}$ and
  $W_j\triangleq\coprod_{\{i|\alpha_{ij}\neq0\}}[0,a_1^i\eta]_i\cup\coprod_{\{i|\beta_{ij}\neq 0\}}[a_m^i\eta+2\eta,1]_i$,
  we can define $h_j$ corresponding to $X_j\cup W_j$ for all $j\in\{1,2,\cdots,p\}$, and we can define $h_k^i$ corresponding to
  $[a_k^i\eta+2\eta,a_{k+1}^i\eta]_i$ for each $i\in \{1,2,\cdots,l\}$, $k\in\{1,2,\cdots,m-1\}$.

  Denote
  $$
  G=\{h_1,h_2,\cdots,h_p,h_1^1,\cdots,h_{m-1}^1,\cdots,h_1^l,\cdots,h_{m-1}^l\},
  $$
  We will construct $\widetilde{G}$ as in \ref{defg}:
  $$
  \widetilde{G}=\{\,h\,|\,h\in \widetilde{H}(\eta),\,\,\kappa(h)\in G\,\}.
  $$

  To define $\phi'':A\rightarrow B$,
  change all the elements $x\in Sp\phi'\cap(0,a_1^i\eta]_i$ to $0_i\thicksim \{\theta_{1}^{\thicksim\alpha_{i1}},\cdots,
  \theta_p^{\thicksim\alpha_{ip}}\}$ and $x \in Sp\phi'\cap(a_m^i\eta+2\eta,1)_i$ to
  $1_i\thicksim \{\theta_1^{\thicksim\beta_{i1}},\cdots,\theta_p^{\thicksim\beta_{ip}}\}$, change all the elements
  $x\in Sp\phi'\cap[a_{k-1}^i\eta+2\eta,a_k^i\eta]_i$ to $(\frac{k-1}{m},i)\in [a_{k-1}^i\eta+2\eta,a_k^i\eta]_i$ for each $i\in \{1,2,\cdots,l\}$, $k\in\{2,\cdots,m\}$. Set $\omega_k^i=\#(Sp\phi'\cap[a_{k-1}^i\eta+2\eta,a_k^i\eta]_i)$.

  There exists a unitary $W$ such that
  $$
   W\phi''(f)W^*=
  \left(
  \begin{array}{ccc}
    a(\theta_1)\otimes I_{t_1'(x)} &  &       \\
    \\
    \quad\quad\quad\quad\ddots &  &        \\
    \\
    & a(\theta_p)\otimes I_{t_p'(x)}&         \\
    \\
     & \quad\quad\quad f((\frac{1}{m},1))\otimes I_{\omega_1^1} &     \\
   \\
    & \quad\quad\quad\quad\quad\quad\ddots &        \\
    \\
    &  & f((\frac{m-1}{m},l))\otimes I_{\omega_m^l}
  \end{array}
  \right).
  $$

  From the construction of $\phi''$, we have
  $$
  \phi'(h)=\phi''(h),\quad\forall\,h\in G\cup \widetilde{G}.
  $$
  Let $P_j= W\phi'(h_j)W^*$, $P_k^i=W\phi'(h_k^i)W^*$, then $P_1,\cdots,P_p,P_1^1,\cdots,P_1^l,\cdots$,$P_{m-1}^l$ are projections, some of them
  may be zero, we  delete them and rewrite them by $P_1,\cdots,P_{n'}$, note that $n'\leq n$ and we can write
  $$
  P_1=
  \left(
  \begin{array}{cccc}
    I_{r_1} &  &  &  \\
     & 0 &  &  \\
     &  & \ddots &  \\
     &  &  & 0
  \end{array}
  \right)
  ,\,\cdots\,,
  P_{n'}=
  \left(
  \begin{array}{cccc}
    0 &  &  &  \\
     & 0 &  &  \\
     &  & \ddots &  \\
     &  &  & I_{r_{n'}}
  \end{array}
  \right)
  .$$

  Since
  $$
  \|\phi(h)-\psi(h)\|<\varepsilon',\quad \forall \,h\in H(\eta)\cup \widetilde{H}(\eta),
  $$
  then we have the following inequality:
  $$
  \|U^*W^*P_rWU-V^*W^*P_rWV\|<\varepsilon',\quad r=1,2,\cdots,n'.
  $$
  Set $\widetilde{W}=WVU^*W^*$, let us write the unitary
  $\widetilde{W}=
  \left(
  \begin{array}{cc}
    w_{11} & w_{1*} \\
    w_{*1} & w_{**}
  \end{array}
  \right)
  $,
  where the size of $w_{11}$ is the same as the rank of $P_1$, then we have $\|w_{1*}\|<\varepsilon'$ and $\|w_{*1}\|<\varepsilon'$,
  apply this computation to $P_2,\cdots,P_{n'}$, then we have
  $$
  \|\widetilde{W}-
  \left(
  \begin{array}{ccc}
    w_{11} &  &  \\
     & \ddots &  \\
     &  & w_{n'n'}
  \end{array}
  \right)
  \|<{n'}^2\varepsilon'\leq n^2\varepsilon'
  $$

  Write $T=
  \left(
  \begin{array}{ccc}
    w_{11} &  &  \\
     & \ddots &  \\
     &  & w_{n'n'}
  \end{array}
  \right)$,
  $T$ is invertible if $n^2\varepsilon'<1$, there is a unitary $S$ such that $T=|T^*|S$, so
  $$
  \|\widetilde{W}S^*-|T^*|\|<n^2\varepsilon'
  $$
  Since $\widetilde{W} S^*$ is a unitary and $|T^*|$ is close to $I$ to within $n^2\varepsilon'$, we have
  $$
  \|\widetilde{W}S^*-I\|\leq\|\widetilde{W}S^*-|T^*|\|+\||T^*|-I\|< 2n^2\varepsilon'.
  $$
  Let $R_t$ ($t\in[\frac{2}{3},1]$) be a unitary path in a $2n^2\varepsilon'$ neighbourhood of $I$ such that $R_{\frac{2}{3}}=\widetilde{W}S^*$ and $R_1=I$.

  Since
  $$
  \|U^*W^*(W\phi'(h)W^*)WU-V^*W^*(W\phi'(h)W^*)WV\|<\varepsilon',\,\,\forall \,h\in H(\eta)\cup \widetilde{H}(\eta).
  $$
  Then we have
  $$
  \|U^*W^*(W\phi'(h)W^*)WU-V^*W^*R_t(W\phi'(h)W^*)R^*_tWV\|<4 n^2\varepsilon'+\varepsilon'<5n^2\varepsilon',
  $$
  for all $h\in H(\eta)\cup \widetilde{H}(\eta)$, $t\in [\frac{2}{3},1]$, when $t=\frac{2}{3}$, we have
  $$
  \|S(W\phi'(h)W^*)-(W\phi'(h)W^*)S\|<5n^2\varepsilon',\,\,\,\forall \,h\in H(\eta)\cup \widetilde{H}(\eta).
  $$
  For any $h\in G\cup\widetilde{G}$, we have $\phi'(h)=\phi''(h)$, then
  $$
  \|S(W\phi''(h)W^*)-(W\phi''(h)W^*)S\|<5n^2\varepsilon',\,\,\, \forall\, h\in G\cup \widetilde{G}.
  $$

  Recall that $S$ has diagonal form $S={\rm diag}(S_1,\cdots,S_{n'})$, write $S=(w_{st}^r)$ as
  $$
  S=
  \left(
  \begin{array}{ccc}
  \left(
  \begin{array}{ccc}
   w_{11}^1 & \cdots & w_{1r_1}^1  \\
   \vdots & \ddots &  \vdots\\
  w_{r_11}^1  & \cdots  & w_{r_1r_1}^1
  \end{array}
  \right)
   &  &  \\
     & \ddots &  \\
     &  &
   \left(
  \begin{array}{ccc}
   w_{11}^{n'} & \cdots  & w_{1r_{n'}}^{n'}\\
   \vdots & \ddots &  \vdots\\
  w_{r_n'1}^{n'} & \cdots  & w_{r_{n'}r_{n'}}^{n'}
  \end{array}
  \right)
  \end{array}
  \right).
  $$
  Then for  the matrix $w_{st}^r$, it commutes with the matrix units to within $5n^2\varepsilon'$, so there exist
$d_{st}^r\in \mathbb{C}$ such that
 $$
 \|w_{st}^r-d_{st}^rI_{st}^r\|<5n^4\varepsilon',
 $$
 where $I_{st}^r$ is the identity matrix with suitable size. Write $D=(d_{st}^rI_{st}^r)$ as
  $$
  \left(
  \begin{array}{ccc}
  \left(
  \begin{array}{ccc}
   d_{11}^1I_{11}^1 & \cdots & d_{1r_1}^1I_{1r_1}^1  \\
   \vdots & \ddots &  \vdots\\
  d_{r_11}^1I_{r_11}^1  & \cdots  & d_{r_1r_1}^1I_{r_1r_1}^1
  \end{array}
  \right)
   &  &  \\
     & \ddots &  \\
     &  &
   \left(
  \begin{array}{ccc}
   d_{11}^{n'}I_{11}^{n'} & \cdots  & d_{1r_n'}^{n'}I_{1r_n'}^{n'}\\
   \vdots & \ddots &  \vdots\\
  d_{r_n'1}^{n'} I_{r_n'1}^{n'} & \cdots  & d_{r_{n'}r_{n'}}^{n'}I_{r_{n'}r_{n'}}^{n'}
  \end{array}
  \right)
  \end{array}
  \right)
  $$
 Then we have
 $$
 \|S-D\|<5n^6\varepsilon',
 $$
 $$
 D(W\phi''(f)W^*)=(W\phi''(f)W^*)D,\quad \forall\,\,f\in A.
 $$
 Hence,
 $$
 \|D(W\phi'(f)W^*)-(W\phi'(f)W^*)D\|<2\|D\|\varepsilon'<2(1+5n^6\varepsilon')\varepsilon'< 12 n^6\varepsilon',\,\,\forall\,\,f\in F.
 $$

 Decompose $D=|D^*|O$ in the commutant of $W\phi''(f)W^*$. Let $R_t'\,(t\in [\frac{1}{3},\frac{2}{3}])$ be an exponential unitary path in that commutant such that $R_{\frac{1}{3}}'=O^*$ and $R_{\frac{2}{3}}'=I$.

 Notice that
 $$
 \|S^*O^*-|D^*|\|<5n^6\varepsilon',
 $$
 use the same technique above, we have
 $$
 \|S^*O^*-I\|<10n^6\varepsilon',
 $$
 Hence there is a unitary path $R_t^{''}\,(t\in[0,\frac{1}{3}])$ in a $10n^6\varepsilon'$ neighbourhood of $I$ such
 that $R_0^{''}= I$ and $R_{\frac{1}{3}}^{''}=S^*O^*$.

 Finally, choose $\varepsilon'$ such that $4n^2\varepsilon'+12n^6\varepsilon'+20n^6\varepsilon'<\varepsilon$, we may take
 $\varepsilon'$ to be $\frac{\varepsilon}{40n^6}$, define a unitary path $u_t$ on $[0,1]$ as follows:
 $$
 u_t^*=
 \begin{cases}
   U^*W^*R_t^{''}W, & \mbox{if } t\in [0,\frac{1}{3}] \\
   U^*W^*S^*R_t'W, & \mbox{if } t\in [\frac{1}{3},\frac{2}{3}]. \\
   V^*W^*R_tW, & \mbox{if } t\in [\frac{2}{3},1]
 \end{cases}
 $$

  Denote
  $$
  \phi_t(f)=u_t^*\cdot
  {\rm diag}\big( a(\theta_1)^{\sim t_1},\cdots,a(\theta_p)^{\sim t_p},f(x_1),f(x_2),\cdots,f(x_{\bullet})\big)
  \cdot u_t.
  $$
  Then $\phi_0=\phi$, $\phi_1=\psi$, $u_0=U$, $u_1=V$ and we will have
  $$
  \|\phi_t(f)-\phi(f)\|<\varepsilon
  $$
 for all $f\in F$, $t\in [0,1]$.
 \end{proof}
\begin{lem}\label{ptb2}
  Let $A=A(F_1,F_2,\varphi_0,\varphi_1)\in \mathcal{C}$ be minimal, $B=M_n(\mathbb{C})$, $F\subset A$ be a finite subset. Given $1>\varepsilon>0$, there exist
   $\eta,\eta_1,\varepsilon'>0$, such that if $\phi,\psi:A\rightarrow B$ are unital homomorphisms satisfy the following conditions:

  (1) $\|\phi(h)-\psi(h)\|<1$, $\forall$ $h\in H(\eta_1)$;

  (2) $\|\phi(h)-\psi(h)\|<\frac{\varepsilon'}{8}$, $\forall$ $h\in H(\eta)\cup \widetilde{H}(\eta)$,

  then there is a continuous path of homomorphisms $\phi_t:A\rightarrow B$ such that $\phi_0=\phi$, $\phi_1=\psi$ and
  $$
  \|\phi_t(f)-\phi(f)\|<\varepsilon
  $$
  for all $f\in F$, $t\in [0,1]$. Moreover, for each $y\in (Sp\phi\cup Sp\psi)\cap\coprod_{i=1}^{l}(0,1)_i$, we have
  $$
  \overline{B_{4\eta_1}(y)}\subset \bigcup_{t\in [0,1]}Sp\phi_t,
  $$
  where $\overline{B_{4\eta_1}(y)}=\{x\in \coprod_{i=1}^{l}[0,1]_i:\, dist(x,y)\leq 4\eta_1\}$.
 \end{lem}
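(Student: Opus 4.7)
The strategy is to construct the continuous path $\phi_t$ in two phases: a spectrum-matching phase followed by an application of Lemma \ref{ptb1}. First, apply Lemma \ref{ptb1} with $A,B,F,\varepsilon/2$ to produce parameters $\eta,\varepsilon'$. Let $\delta>0$ be a modulus of continuity for $F$, so that $dist(x,x')<6\eta_1$ implies $\|f(x)-f(x')\|<\varepsilon/2$ for every $f\in F$ (once $\eta_1$ is fixed small). Then choose $\eta_1$ small enough that $6\eta_1<\delta$ and $2\eta_1/\eta<7\varepsilon'/8$.

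Since $H(\eta_1)$ is finite, condition (1) gives a uniform $\|\phi(h)-\psi(h)\|\le M<1$, so Lemma \ref{pair1} (with $\eta_1$ in place of $\eta$) pairs $Sp\phi\cap[\eta_1,1-\eta_1]_i$ with the corresponding part of $Sp\psi$ to within $2\eta_1$. I will extend this to a full matching of $Sp\phi$ and $Sp\psi$ as multisets in $Sp(A)$, using the boundary identifications $0_i\leftrightarrow(\theta_1^{\thicksim\alpha_{i1}},\dots,\theta_p^{\thicksim\alpha_{ip}})$, $1_i\leftrightarrow(\theta_1^{\thicksim\beta_{i1}},\dots,\theta_p^{\thicksim\beta_{ip}})$ to absorb points lying in $(0,\eta_1)_i\cup(1-\eta_1,1)_i$ into the $\theta_j$-multiplicities; this is possible because $\phi$ and $\psi$ are unital of the same rank $n$. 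For each paired pair $(x_k,y_k)$ I pick a continuous path $\gamma_k\colon[0,\tfrac12]\to \coprod_i[0,1]_i$ with $\gamma_k(0)=x_k$, $\gamma_k(\tfrac12)=y_k$, such that $\gamma_k([0,\tfrac12])\supset \overline{B_{4\eta_1}(x_k)}\cup\overline{B_{4\eta_1}(y_k)}$ and $dist(\gamma_k(t),x_k)\le 6\eta_1$ for all $t$; this is possible because $dist(x_k,y_k)<2\eta_1$, so the path can first sweep a $4\eta_1$-interval around $x_k$, then move to and sweep around $y_k$. With $U$ the unitary from the canonical decomposition of $\phi$, define
$$
\phi_t(f)=U^*\,\mathrm{diag}\bigl(a(\theta_1)^{\thicksim t_1},\dots,a(\theta_p)^{\thicksim t_p},f(\gamma_1(t)),\dots,f(\gamma_\bullet(t))\bigr)\,U,\quad t\in[0,\tfrac12].
$$
Then $\phi_{1/2}$ has $Sp\phi_{1/2}=Sp\psi$, and $\|\phi_t(f)-\phi(f)\|<\varepsilon/2$ for $f\in F$ since each $\gamma_k(t)$ stays within $6\eta_1<\delta$ of $x_k$.

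Next I verify the hypotheses of Lemma \ref{ptb1} between $\phi_{1/2}$ and $\psi$. The spectra already agree by construction. For $h\in H(\eta)\cup\widetilde{H}(\eta)$, the test functions are $\eta^{-1}$-Lipschitz, so
$$
\|\phi_{1/2}(h)-\phi(h)\|\le\max_k\|h(y_k)-h(x_k)\|<\tfrac{2\eta_1}{\eta},
$$
and combining with condition (2) gives $\|\phi_{1/2}(h)-\psi(h)\|<\tfrac{2\eta_1}{\eta}+\tfrac{\varepsilon'}{8}<\varepsilon'$. Applying Lemma \ref{ptb1} yields a continuous extension $\phi_t$ ($t\in[\tfrac12,1]$) from $\phi_{1/2}$ to $\psi$ with $\|\phi_t(f)-\phi_{1/2}(f)\|<\varepsilon/2$, so $\|\phi_t(f)-\phi(f)\|<\varepsilon$ on the whole of $[0,1]$. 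The spectrum-coverage statement then follows immediately: every $y\in(Sp\phi\cup Sp\psi)\cap\coprod_i(0,1)_i$ appears as some $x_k$ or $y_k$, and by construction $\gamma_k([0,\tfrac12])\supset\overline{B_{4\eta_1}(y)}$, so $\overline{B_{4\eta_1}(y)}\subset\bigcup_{t\in[0,1]}Sp\phi_t$.

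The main obstacle is coordinating three competing scales: $\eta$ (from Lemma \ref{ptb1}), the modulus of continuity $\delta$ of $F$, and the pairing radius $2\eta_1$ from Lemma \ref{pair1}, which must simultaneously (i) be small enough that the $\pm4\eta_1$ sweep respects the $F$-perturbation budget (requiring $6\eta_1<\delta$), and (ii) allow the spectrum-shifted $\phi_{1/2}$ to stay $\varepsilon'$-close to $\psi$ on the $H(\eta)\cup\widetilde{H}(\eta)$ test functions (requiring $2\eta_1/\eta<7\varepsilon'/8$). A secondary subtle point is the bookkeeping of the pairing when points of $Sp\phi$ or $Sp\psi$ drift into the outer strips $(0,\eta_1)_i\cup(1-\eta_1,1)_i$: they must be re-interpreted via $\varphi_0,\varphi_1$ as boundary contributions so that the paired multiplicities $t_1,\dots,t_p$ match those of $\psi$ at $t=\tfrac12$.
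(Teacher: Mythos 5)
Your overall strategy (a spectrum-moving phase glued to Lemma \ref{ptb1}) mirrors the paper's, and you correctly identify the competing scales and the role of condition~(1), but there are two genuine gaps.

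First, moving $Sp\phi$ all the way to $Sp\psi$ in a single phase does not work in general. Along the path $\phi_t=U^*\mathrm{diag}(a(\theta_1)^{\thicksim t_1},\dots,f(\gamma_1(t)),\dots)U$ the $\theta_j$-blocks are frozen: only the $(0,1)_i$-points move. A point $\gamma_k(t)$ hitting $0_i$ contributes the \emph{entire} block $(\theta_1^{\thicksim\alpha_{i1}},\dots,\theta_p^{\thicksim\alpha_{ip}})$, not a single chosen $\theta_j$; and the reverse never happens (a $\theta_j$-block cannot be released back into $(0,1)_i$). So one cannot in general produce $\psi$'s strip points or match $\psi$'s $\theta_j$-multiplicities starting only from $\phi$'s diagonal form. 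The paper resolves this by moving \emph{both} endpoints inward, defining $\phi'$ on $[0,\frac13]$ from $\phi$ and $\psi'$ on $[\frac23,1]$ from $\psi$ (each absorbing its own boundary strips), and only then applying Lemma \ref{ptb1} on the middle third between $\phi'$ and $\psi'$. In that scheme the spectra are made to agree at a common absorbed configuration, and no ``un-absorbing'' is ever required.

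Second, the sentence ``this is possible because $\phi$ and $\psi$ are unital of the same rank $n$'' is not a correct justification for the multiset pairing. Two unital homomorphisms into $M_n(\mathbb{C})$ can have different $\theta_j$-multiplicity vectors; equality of $n$ only constrains the weighted sum $\sum_j t_j k_j$. After absorbing strip points, one needs for each $j$ the identity
$$
t_j^{\phi}+\sum_i|S_i^{\phi}|\,\alpha_{ij}+\sum_i|T_i^{\phi}|\,\beta_{ij}
=t_j^{\psi}+\sum_i|S_i^{\psi}|\,\alpha_{ij}+\sum_i|T_i^{\psi}|\,\beta_{ij},
$$
where $S_i,T_i$ are the left/right strip counts, and this is not automatic. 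The paper derives it from condition~(1): choosing $a_i<b_i$ with a spectral gap $(a_i\eta_1,b_i\eta_1)_i$ free of both $Sp\phi$ and $Sp\psi$, the type-1 test functions $h_j\in H(\eta_1)$ evaluate on $Sp\phi,\,Sp\psi$ only at $\{0,1\}$, so $\phi(h_j)$ and $\psi(h_j)$ are projections; $\|\phi(h_j)-\psi(h_j)\|<1$ then forces equality of their ranks, and those ranks are exactly the two sides of the displayed identity. Without this step the claimed equality $Sp\phi_{1/2}=Sp\psi$ (or, in the paper's version, $Sp\phi'=Sp\psi'$) is unsupported, and Lemma \ref{ptb1} cannot be invoked.
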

\begin{proof}
  Take $\varepsilon',\eta,m$ as in  Lemma \ref{ptb1}. Let $\eta_1=\frac{1}{m_1}<\frac{\eta}{2}$ satisfys that $\|h(x)-h(x')\|<\frac{\varepsilon'}{8}$ for any
 $dist(x,x')\leq 4\eta_1$ and for all $h\in H(\eta)\cup \widetilde{H}(\eta)$.

  There exist unitaries $U,V$ such that
  $$
  \phi(f,a)=U^*\cdot{\rm diag}\big( a(\theta_1)^{\sim s_1},\cdots,a(\theta_p)^{\sim s_p},f(x_1),f(x_2),\cdots,f(x_{\bullet})\big)\cdot U.
  $$
  $$
  \psi(f,a)=V^*\cdot{\rm diag}\big( a(\theta_1)^{\sim t_1},\cdots,a(\theta_p)^{\sim t_p},f(y_1),f(y_2),\cdots,f(y_{\bullet\bullet})\big)\cdot V.
  $$
  where $f\in A$, $x_1,x_2,\cdots,y_1,y_2,\cdots\in \coprod_{i=1}^{l}(0,1)_i$.

  From condition (1) and Lemma \ref{pair1}, for each $i\in \{1,2,\cdots,l\}$, there exists $X_i\subset Sp\phi\cap(0,1)_i$, $X_i'\subset Sp\psi\cap(0,1)_i$
  with $X_i\supset Sp\phi\cap[\eta_1,1-\eta_1]_i$ , $X_i'\supset Sp\psi\cap[\eta_1,1-\eta_1]_i$ such that $X_i$ and $X_i'$ can be paired to
  within $2\eta_1$ one by one, denote the one to one correspondence by $\pi: X_i\rightarrow X_i'$.

  To define $\phi'$,
  change all the elements $x_k\in (0,\eta_1)_i\backslash X_i$ to $0_i\thicksim \{\theta_{1}^{\thicksim\alpha_{i1}},\cdots,
  \theta_p^{\thicksim\alpha_{ip}}\}$ and $x_k\in (1-\eta_1,1)_i\backslash X_i$ to
  $1_i\thicksim \{\theta_1^{\thicksim\beta_{i1}},\cdots,\theta_p^{\thicksim\beta_{ip}}\}$,
  and finally, change all the $x_k\in X_i$ to $\pi(x_k)\in X_i'$.
  To define $\psi'$, change all the elements $y_k\in (0,\eta_1)_i\backslash X_i'$ to
  $0_i\thicksim \{\theta_{1}^{\thicksim\alpha_{i1}},\cdots,\theta_p^{\thicksim\alpha_{ip}}\}$
  and $y_k\in (1-\eta_1,1)_i\backslash X_i'$ to $1_i\thicksim \{\theta_1^{\thicksim\beta_{i1}},\cdots,\theta_p^{\thicksim\beta_{ip}}\}$.
  Then we have
  $$
  Sp\phi'\cap(0,1)_i=Sp\psi'\cap(0,1)_i
  $$
  for all $i=1,2,\cdots,l$.

  Since $2\eta_1<\eta=\frac{1}{2mn}$, then for each $[0,1]_i$, there exist integers $a_i, b_i$ with $1<a_i<a_i+2\leq b_i< m_1$ such that
  $$
  Sp\phi\cap(a_i\eta_1,b_i\eta_1)_i=Sp\psi\cap(a_i\eta_1,b_i\eta_1)_i=\varnothing.
  $$

  Then for $X_j=\{\theta_j\}$ and $W_j\triangleq\coprod_{\{i|\alpha_{ij}\neq0\}}[0,a_i\eta_1]_i\cup\coprod_{\{i|\beta_{ij}\neq 0\}}[b_i\eta_1,1]_i$,
  we can define $h_j$ corresponding to $X_j$ and $W_j$ in $H(\eta_1)$, then $\phi(h_j),\psi(h_j)$ are projections and
  $$
  \phi(h_j)=\phi'(h_j),\quad \psi(h_j)=\psi'(h_j),
  $$
  $$
  \|\phi(h_j)-\psi(h_j)\|<1,
  $$
  for each $j=1,2,\cdots,p$, this fact means that
  $$
  Sp\phi'\cap Sp(F_1)=Sp\psi'\cap Sp(F_1).
  $$
  Now we have $ Sp\phi'=Sp\psi'$.

  For each $x_k\in Sp\phi\cap (0,1)_i$, define a continuous map
  $$
  \gamma_k:[0,\frac{1}{3}]\rightarrow \coprod_{i=1}^{l}\,[0,1]_i
  $$
  with the following properties:

  (i) $\gamma_k(0)=x_k$;

  (ii) $
  \gamma_k(\frac{1}{3})=
  \begin{cases}
    0_i, & \mbox{if } x_k\in (0,\eta_1)_i\backslash X_i \\
    \pi(x_k), & \mbox{if } x_k\in X_i \\
    1_i, & \mbox{if } x_k\in (1-\eta_1,1)_i\backslash X_i
  \end{cases};
  $

  (iii) Im$\gamma_k=\overline{B_{4\eta_1}(x_k)}=\{x\in \coprod_{i=1}^{l}[0,1]_i;\, dist(x,x_k)\leq 4\eta_1\}$.

  Define $\phi_t$ on $[0,\frac{1}{3}]$ by
   $$
   \phi_t(f)=U^*\cdot{\rm diag}\big(a(\theta_1)^{\sim s_1},\cdots,a(\theta_p)^{\sim
   s_p},f(\gamma_1(x)),f(\gamma_2(x)),\cdots,f(\gamma_{\bullet}(x))\big)\cdot U.
   $$
  Then $\phi_{\frac{1}{3}}=\phi'$, and
  $$
  \|\phi(h)-\phi'(h)\|< \frac{\varepsilon'}{8},\quad \forall\, h\,\in H(\eta)\cup \widetilde{H}(\eta).
  $$
  Similarly, for each $y_k\in Sp\psi\cap (0,1)_i$, define a continuous map
  $$
  \gamma_k':[\frac{2}{3},1]\rightarrow \coprod_{i=1}^{l}\,[0,1]_i
  $$
  with the following properties:

  (i) $
  \gamma_k'(\frac{2}{3})=
  \begin{cases}
    0_i, & \mbox{if } y_k\in (0,\eta_1)_i\backslash X_i' \\
    y_k, & \mbox{if } y_k\in X_i' \\
    1_i, & \mbox{if } y_k\in (1-\eta_1,1)_i\backslash X_i'
  \end{cases};
  $

  (ii) $\gamma_k'(1)=y_k$;

  (iii) Im$\gamma_k'=\overline{B_{4\eta_1}(y_k)}=\{y\in \coprod_{i=1}^{l}[0,1]_i;\, dist(y,y_k)\leq 4\eta_1\}$.

  Define $\phi_t$ on $[\frac{2}{3},1]$ by
   $$
   \phi_t(f)=V^*\cdot{\rm diag}\big(a(\theta_1)^{\sim t_1},\cdots,a(\theta_p)^{\sim
   t_p},f(\gamma_1'(y)),f(\gamma_2'(y)),\cdots,f(\gamma_{\bullet\bullet}'(y))\big)\cdot V.
   $$
  Then $\phi_{\frac{2}{3}}=\psi'$, and
  $$
  \|\psi(h)-\psi'(h)\|< \frac{\varepsilon'}{8},\quad \forall\, h\in H(\eta)\cup \widetilde{H}(\eta).
  $$
  $$
  \|\phi'(h)-\psi'(h)\|<\frac{\varepsilon'}{8}+\frac{\varepsilon'}{8}+\frac{\varepsilon'}{8}< \frac{\varepsilon'}{2},\quad \forall\, h\in H(\eta)\cup \widetilde{H}(\eta).
  $$
  Apply Lemma \ref{ptb1},  then there is a continuous path of homomorphisms $\phi_t:A\rightarrow B$, $t\in [\frac{1}{3},\frac{2}{3}]$, such that $\phi_{\frac{1}{3}}=\phi'$,
  $\phi_{\frac{2}{3}}=\psi'$ and
  $$
  \|\phi_t(f)-\phi'(f)\|<\frac{\varepsilon}{2},\quad \forall\,\,f\in F.
  $$
  Now we have a continuous path of homomorphisms $\phi_t:A\rightarrow B$ such that $\phi_0=\phi$, $\phi_1=\psi$ and
  $$
  \|\phi_t(f)-\phi(f)\|<\varepsilon
  $$
  for all $f\in F$, $t\in [0,1]$.

  From the property (iii) of $\gamma_k$ and  $\gamma_k'$, for any $y\in (Sp\phi\cup Sp\psi)\cap \coprod_{i=1}^{l}(0,1)_i$, we have
  $$
  \overline{B_{4\eta_1}(y)}\subset \bigcup_{t\in [0,1]}Sp\phi_t.
  $$
  where $\overline{B_{4\eta_1}(y)}=\{x\in \coprod_{i=1}^{l}[0,1]_i:\, dist(x,y)\leq 4\eta_1\}$.
\end{proof}
\begin{thrm}\label{mainthrm}
  Let $A,B\in \mathcal{C}$, $F\subset A$ be a finite subset, $Y\subset Sp(B)$ be a closed subset, and $G\subset B|_Y$ be a finite subset.
  Let $\phi:A\rightarrow B|_Y$ be a unital injective homomorphism, then for any $\varepsilon>0$, there exist a closed subset $Z\subset Y$
  and a unital injective homomorphism $\psi:A\rightarrow B|_Z$ such that,

  (1) $\|\phi(f)-\psi(f)\|<\varepsilon,\,\,\forall\,f\in F$;

  (2) $G\subset_{\varepsilon}B|_Z\in \mathcal{C}$.
\end{thrm}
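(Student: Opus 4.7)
The plan is to apply Lemma \ref{jihu} to the data $(B,Y,G)$ with a carefully chosen parameter $\delta>0$, producing a closed subset $Z\subset Sp(B)$ and a surjective map $\rho:Y\to Z$ with $dist(\rho(y),y)<\delta$ such that $B|_Z\in\mathcal{C}$ and $G\subset_{\varepsilon}B|_Z$, where $B|_Z$ is viewed as a subalgebra of $B|_Y$ via the inclusion $\rho^*$. This secures clause (2) immediately; the real substance of the argument is to construct a unital \emph{injective} homomorphism $\psi:A\to B|_Z$ whose composition with $\rho^*$ approximates $\phi$ to within $\varepsilon$ on $F$.

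I choose $\delta$ small enough that $dist(y,y')<\delta$ forces $\|\phi(f)(y)-\phi(f)(y')\|<\varepsilon/3$ for each $f\in F$ and $\|\phi(h)(y)-\phi(h)(y')\|<\varepsilon'/8$ for each $h$ in the finite test sets $H(\eta)\cup\widetilde H(\eta)\cup H(\eta_1)$, and also insist that $\delta<4\eta_1$. Here $\eta,\eta_1,\varepsilon'$ are the constants produced by Lemma \ref{ptb2} applied to $A$, $F$ and $\varepsilon/3$. With this $\delta$, Definition \ref{deff} presents $Z$ as a finite union of vertices (points of $P(Z)$ together with the $\theta_j\in Y$) joined by line segments $[z_s,z_t]_i$, each arising from an adjacent pair $\{(y_s,i),(y_t,i)\}\subset\hat Y$ with $dist(y_s,y_t)<\delta$, on which $\rho$ restricts to a non-decreasing surjection $[y_s,y_t]_i\cap Y\to[z_s,z_t]_i$.

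I define $\psi$ piece by piece. At each vertex $z\in P(Z)$ corresponding to $y\in\hat Y$, and at each $\theta_j\in Z$, set $\psi(a)(z):=\phi(a)(y)$; since $\rho$ preserves spectral type, this automatically respects the fiber identifications and the boundary conditions of $B|_Z\in\mathcal{C}$. On each edge $[z_s,z_t]_i$ (Case 1 of Definition \ref{deff}), apply Lemma \ref{ptb2} inside $M_{l_i'}(\mathbb{C})$ to the homomorphisms $\phi_{y_s},\phi_{y_t}:A\to M_{l_i'}(\mathbb{C})$ to produce a continuous path $\phi_t$ with $\phi_0=\phi_{y_s}$, $\phi_1=\phi_{y_t}$, variation $\|\phi_t(f)-\phi_{y_s}(f)\|<\varepsilon/3$ on $F$, and the crucial \emph{spectrum-covering} property $\overline{B_{4\eta_1}(y')}\subset\bigcup_{t}Sp\phi_t$ for every $y'\in Sp\phi_{y_s}\cup Sp\phi_{y_t}$. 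Linearly reparameterizing $t\in[0,1]$ onto $[z_s,z_t]_i$ defines $\psi$ along the edge; Case-2 adjacent pairs produce no new edge and require nothing beyond the vertex data already in place. Continuity at vertices and matching of boundary identifications then assemble $\psi$ into a well-defined unital homomorphism $A\to B|_Z$, and clause (1) follows by a triangle inequality combining the small variation of $\phi_t$ with the continuity of $\phi(f)$ and the bound $dist(\rho(y),y)<\delta$.

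The hard part will be \textbf{injectivity} of $\psi$. Since $\phi$ is injective, $\overline{\bigcup_{y\in Y}Sp\phi_y}=Sp(A)$, i.e.\ the family of fiber representations is jointly faithful. The spectrum-covering clause of Lemma \ref{ptb2} forces $\bigcup_{z\in Z}Sp\psi_z$ to contain a full $4\eta_1$-neighborhood of each $Sp\phi_{y_s}$, $y_s\in\hat Y$; combined with the continuity of $y\mapsto Sp\phi_y$ up to pairing (Lemma \ref{pair1}) and the choice $\delta<4\eta_1$, this yields $\bigcup_{y\in Y}Sp\phi_y\subset\bigcup_{z\in Z}Sp\psi_z$, and hence $\overline{\bigcup_{z\in Z}Sp\psi_z}=Sp(A)$, so $\psi$ is injective. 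It is precisely this preservation of spectral density that forces us to use the sharper Lemma \ref{ptb2}, rather than Lemma \ref{ptb1}, in constructing $\psi$ along the edges.
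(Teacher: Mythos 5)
Your construction of $Z$ and of $\psi$ (Lemma \ref{jihu} for clause (2), $\psi_{z_k}:=\phi_{y_k}$ at the vertices of $P(Z)$, and Lemma \ref{ptb2} reparameterized along each Case-1 edge for clause (1)) is exactly the paper's construction, and that part is fine. The gap is in your injectivity argument. You assert that the spectrum-covering clause of Lemma \ref{ptb2} forces $\bigcup_{z\in Z}Sp\psi_z$ to contain a full $4\eta_1$-neighborhood of $Sp\phi_{y_s}$ for \emph{every} $y_s\in\hat Y$. That is false for a vertex $y_s$ all of whose adjacent pairs fall into Case 2 of Definition \ref{deff} (i.e.\ $[y_s,y_t]_i\cap Y$ countable): no edge of $Z$ emanates from $z_s$, $\psi$ is defined there only at the isolated point $z_s$, and $Sp\psi_{z_s}=Sp\phi_{y_s}$ is a finite set with no neighborhood covered. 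Consequently your deduction that $\bigcup_{y\in Y}Sp\phi_y\subset\bigcup_{z\in Z}Sp\psi_z$ does not follow: for $y$ in the countable part $Y'$ of $Y$, the nearest vertices may carry no paths at all, and nothing you have said prevents $Sp\phi_y$ from escaping $\bigcup_z Sp\psi_z$.

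The missing idea is the cardinality argument that the paper uses to route every point of $Sp(A)$ through a Case-1 vertex. Decompose $A$ into minimal summands, handle the finite-dimensional ones directly, and split $Y\cap\coprod_i[0,1]_i$ into $Y'$ (union of the Case-2 pieces, a countable set) and $Y''$ (union of the Case-1 pieces). For any $x_0$ in an open interval of $Sp(A)$, injectivity of $\phi$ gives $\overline{B_{\eta_1}(x_0)}\subset Sp\phi$, an uncountable set, while the contribution to $Sp\phi$ from $Y'$ and from $Y\cap Sp(F_1)$ is countable; hence some $y\in[y_s,y_t]_i\cap Y''$ has $Sp\phi_y$ meeting $\overline{B_{\eta_1}(x_0)}$. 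Only now does Lemma \ref{pair1} produce $x_2\in Sp\phi_{(y_s,i)}$ with $dist(x_0,x_2)\leq 3\eta_1<4\eta_1$ for a vertex $y_s$ that actually bounds an edge of $Z$, so that the covering property $\overline{B_{4\eta_1}(x_2)}\subset\bigcup_{z\in[z_s,z_t]_i}Sp\psi_z$ applies and yields $x_0\in Sp\psi$. Without this counting step (which is precisely where the Case-1/Case-2 dichotomy and the uncountability hypothesis of Lemma \ref{surjmap} earn their keep), the injectivity of $\psi$ is not established.
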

\begin{proof}
  Set $n=L(B)$, choose $\varepsilon',\eta,\eta_1$ as in Lemma \ref{ptb2}, then there exists $\delta>0$
  such that for any $dist(y,y')<\delta$, we have the following:
  $$
  \|\phi_y(h)-\phi_{y'}(h)\|<1,\quad\forall\,\,h\,\in H(\eta_1),
  $$
  $$
  \|\phi_y(h)-\phi_{y'}(h)\|<\frac{\varepsilon'}{8},\quad\forall\, h\in H(\eta)\cup \widetilde{H}(\eta)
  $$
  $$
  \|g(y)-g(y')\|<\varepsilon,\quad\forall\,\,g\in G.
  $$
  Apply Lemma \ref{jihu}, we can obtain a closed subset $Z$ and a surjective map $\rho:Y\rightarrow Z$, such that $G\subset_{\varepsilon}B|_Z\in \mathcal{C}$.

  We will define an injective homomorphism $\psi:A\rightarrow B|_Z$ as follows.

  Recall the construction of $\hat{Y}$ and $P(Z)$ in \ref{deff}. Let $P(Z)=\{z_1,z_2,\cdots\}$ be the points corresponding to the finite points
  $\{y_1,y_2,\cdots\}=\hat{Y}$. Define
  $$
  \psi_{z_k}(f)=\psi_{\rho(y_k)}(f)=\phi_{y_k}(f),\quad \forall f\in A,\,\, z_k\in\{z_1,z_2,\cdots\}.
  $$
  For each adjacent pair $\{(y_s,i),(y_t,i)\}$, if $(y_s,y_t)_i\cap Y$ has at most countably many points, then $(z_s,z_t)_i\cap Z=\varnothing$, we don't need to define $\psi$ on $(z_s,z_t)_i$, if $(y_s,y_t)_i\cap Y$ has uncountable many points, then we have $dist((y_s,i),(y_t,i))<\delta$ and $[z_s,z_t]_i\subset Z$, then by
  Lemma \ref{ptb2}, we can define $\psi$ on $[z_s,z_t]_i$ and
  $$
  \|\psi_z(f)-\phi_{(y_s,i)}(f)\|<\varepsilon,\,\,\forall f\in F,\,\forall\,z\in [z_s,z_t]_i.
  $$
   Apply the above procedure to all adjacent pairs in $\hat{Y}$, we can define $\psi$ on each $[z_s,z_t]_i\subset Z$ piece by piece, then we obtain $\psi$ on $Z\cap \coprod_{i=1}^{l}[0,1]_i$. For each $\theta_j\in Z\cap Sp(F_1)$, define $\psi_{\theta_j}(f)=\phi_{\theta_j}(f)$ for all $\theta_j\in Y\cap Sp(F_1)$. Then we have defined $\psi$ on $Z$ and $\psi$ satisfys property (1).

  To prove $\psi$ is injective, we only need to verify that
  $Sp\psi=\bigcup_{z\in Z}Sp\psi_z=Sp(A)$.
  The proof is similar to the corresponding part of \cite{Li1}.

  Write $A=\bigoplus_{k=1}^mA_k$ with all $A_k$ are minimal, then $Sp(A)=\coprod_{k=1}^m Sp(A_k)$. Define an index set $\Lambda\subset\{1,2,\cdots,m\}$ such that $A_k$ is a finite dimensional $\mathrm{C}^*$-algebra iff $k\in \Lambda$. For $k\in \Lambda$, $\phi|_{A_k}\neq 0$ means that $Sp(A_k)\subset Sp\phi$, by the definition of $\psi$, we have $\psi|_{A_k}\neq 0$, then $Sp(A_k)\subset Sp\psi$.

  Consider $\widetilde{A}=\widetilde{A}(\widetilde{F}_1,\widetilde{F}_2,
  \widetilde{\varphi}_0,\widetilde{\varphi}_1)=\bigoplus_{k\notin \Lambda}A_k$, we define two sets $Y',Y''\subset Y$, for each adjacent pair $\{(y_s,i),(y_t,i)\}$, if $(y_s,y_t)_i\cap Y$ has at most countably many points, let $(y_s,y_t)_i\cap Y\subset Y'$, if $(y_s,y_t)_i\cap Y$ has uncountable many points, let $[y_s,y_t]_i\cap Y\subset Y''$. Then  we have $Y'\cap Y''=\varnothing$ and $Y'\cup Y''=Y\cap \coprod_{i=1}^{l}[0,1]_i$, note that $Y'$ has at most countably many points.

  For any point $x_0\in \coprod_{i=1}^{l}(0,1)_i$ and
  $\overline{B_{\eta_1}(x_0)}=\{x\in Sp(\widetilde{A}):\,dist(x,x_0)\leq\eta_1\}$,
  $\overline{B_{\eta_1}(x_0)}\cap(\bigcup_{y\in Y'}Sp\phi_y)$ have at most countably many points. Follow the injectivity of $\phi$, we have
  $$
  \overline{B_{\eta_1}(x_0)}\subset Sp\phi=\bigcup_{y\in
  Y''}
  Sp\phi_y\cup \bigcup_{y\in Y'}Sp\phi_y\cup\bigcup_{y\in Y\cap Sp(\widetilde{ F}_1)}Sp\phi_y.
  $$
  Then the set $\bigcup_{y\in Y''}Sp\phi_y\cap\overline{B_{\eta_1}(x_0)}$ has uncountably many points, recall the definition of $Y''$,
  there is at least one adjacent pair $\{(y_s,i),(y_t,i)\}$ such that $[y_s,y_t]_i\cap Y$ has uncountably many points, then we have $\psi$  defined on $[z_s,z_t]_i\subset Z$.

  Choose $$x_1\in \bigcup_{y\in[y_s,y_t]_i\cap Y''}Sp\phi_y\cap \overline{B_{\eta_1}(x_0)},$$
  then there exists
  $x_2\in Sp\phi_{(y_s,i)}$ such that $dist(x_1,x_2)\leq2\eta_1$, we have
  $$
  dist(x_0,x_2)\leq dist(x_0,x_1)+dist(x_1,x_2)\leq3\eta_1<4\eta_1.
  $$

  By Lemma \ref{ptb2}, we will have
  $$
  x_0\in \overline{B_{4\eta_1}(x_2)}\subset \bigcup_{z\in[z_s,z_t]_i}Sp\psi_z
  $$
  This means that $\coprod_{i=1}^{l}(0,1)_i\subset Sp\psi$.

  Note that, if we choose $x_0$ such that
  $x_0\in \coprod_{i=1}^{l}(0,\eta_1)_i\cup(\eta_1,1)_i$,
  then we will have $0_i,1_i\in Sp\psi$ for all $i\in \{1,2,\cdots,l\}$, this means that $Sp(\widetilde{F}_1)\subset Sp\psi$.

  Now we have
  $$
  Sp\psi=\bigcup_{z\in Z}Sp\psi_z=Sp(\widetilde{A})\cup \coprod_{k\in \Lambda}Sp(A_k)=Sp(A).
  $$
  This ends the proof of the injectivity of $\psi$.
\end{proof}
\begin{remark}\label{nonmore}
  Theorem \ref{mainthrm} still holds if we let $\phi$ be non-unital, then the homomorphism $\psi$ will also be non-unital.
\end{remark}
\begin{ddef}
  \textbf{Proof of Theorem 3.1} \cite{Li1}. Let $\widetilde{A}_n=\phi_{n,\infty}(A_n)$, $n=1,2,\cdots$. Then we can write
  $A=\lim_{n\rightarrow\infty}(\widetilde{A}_n,\tilde{\phi}_{n,m})$, where the homomorphism $\widetilde{\phi}_{n,m}$ are
  induced by $\phi_{n,m}$, and they are injective.

  Let $\varepsilon_n=\frac{1}{2^n}$, $\{x_i\}_{i=1}^{\infty}$ be a dense subset of $A$. We will construct an injective inductive limit
  $B_1\rightarrow B_2\rightarrow \cdots$ as follows.

  Consider $G_1={x_1}\subset A$. There is an $\widetilde{A}_{i_1}$, and a finite subset $\tilde{G}_1\subset \widetilde{A}_{i_1}$ such that
  $G_1\subset_{\frac{\varepsilon_1}{2}}\widetilde{G}_{i_1}$.

  For $\widetilde{G}_1\subset\widetilde{A}_{i_1}$, apply Lemma \ref{jihu}, there exists a sub-algebra $B_1\subset\widetilde{A}_{i_1}$ such that
  $B_1\in \mathcal{C}$ and $\widetilde{G}_1\subset_{\frac{\varepsilon_1}{2}} \widetilde{B}_1$. This give us an injective homomorphism
  $B_1\hookrightarrow \widetilde{A}_{i_1}$. Let $\{b_{1j}\}_{j=1}^{\infty}$ be a dense subset of $B_1$. Set $\widetilde{F}_1=\{b_{11}\}\subset B_1$
  and $G_2=\{x_1,x_2\}\subset A$. There exist $\tilde{A}_{i_2}$, $i_2>i_1$ and a finite subset $\widetilde{G}_2\subset \widetilde{A}_{i_2}$ such that
  $G_2\subset_{\frac{\varepsilon_2}{2}}\widetilde{G}_2$. Apply Theorem \ref{mainthrm} and Remark \ref{nonmore} to $\widetilde{F}_1\subset B_1$, $\widetilde{G}_2\subset \widetilde{A}_{i_2}$, and
  the injective map $B_1\hookrightarrow \widetilde{A}_{i_1}\rightarrow \widetilde{A}_{i_2}$, there exist a sub-algebra $B_2\subset\widetilde{A}_{i_2}$ and
  an injective homomorphism $\psi_{1,2}:B_1\rightarrow B_2$ such that $\widetilde{G}_2\subset_{\frac{\varepsilon_2}{2}} \widetilde{B}_2$ and such that
  the diagram
  $$
  \begin{matrix}
    \widetilde{A}_{i_1} & \xrightarrow{\widetilde{\phi}_{i_1,i_2}} & \widetilde{A}_{i_2} \\
    \uparrow &  & \uparrow \\
    B_1 & \xrightarrow{\psi_{1,2}} & B_2
  \end{matrix}
  $$
  almost commutes on $\widetilde{F}_1$ to within $\varepsilon_1$. Let $\{b_{2j}\}_{j=1}^{\infty}$ be a dense subset of $B_2$.
  Choose
  $$
  \widetilde{F}_2=\{b_{21},b_{22}\}\cup\{\psi_{1,2}(b_{11}),\psi_{1,2}(b_{12})\},\quad G_3=\{x_2,x_2,x_3\}
  $$
  in the place of $\widetilde{F}_1$ and $G_2$ respectively, and repeat the above construction to obtain
  $\widetilde{A}_{i_3}$, $B_3\subset \widetilde{A}_{i_3}$ and an injective map $\psi_{2,3}:B_2\rightarrow B_3$
  (Using $\varepsilon_2$ and $\varepsilon_3$ in place of $\varepsilon_1$ and $\varepsilon_2$, respectively).

  In general, we can construct the diagram
  $$
 \begin{matrix}
\widetilde{A}_{i_1} & \xrightarrow{\widetilde{\phi}_{i_1,i_2}} & \widetilde{A}_{i_2} & \xrightarrow{\widetilde{\phi}_{i_2,i_3}} &
\widetilde{A}_{i_3} & \rightarrow\,\,\cdots & \widetilde{A}_{i_k} & \rightarrow\,\,\cdots
 \\
\uparrow &  & \uparrow &  & \uparrow &  & \uparrow &  \\
B_1 & \xrightarrow{\psi_{1,2}} & B_2 & \xrightarrow{\psi_{2,3}} & B_3 & \rightarrow\,\,\cdots
 & B_k & \rightarrow\,\,\cdots
\end{matrix}
$$
  with the following properties:

  (i) The homomorphism $\psi_{k,k+1}$ are injective;

  (ii) For each $k$, $G_k=\{x_1,x_2,\cdots,x_k\}\subset_{\varepsilon_k} \widetilde{\phi}_{i_k,\infty}(B_k)$, where
  $B_k$ is considered to be a sub-algebra of $\widetilde{A}_{i_k}$;

  (iii) The diagram
  $$
  \begin{matrix}
    \widetilde{A}_{i_k} & \xrightarrow{\widetilde{\phi}_{i_k,i_{k+1}}} & \widetilde{A}_{i_{k+1}} \\
    \uparrow &  & \uparrow \\
    B_k & \xrightarrow{\psi_{k,k+1}} & B_{k+1}
  \end{matrix}
  $$
  almost commutes on $\widetilde{F}_k=\{b_{ij};\, 1\leq i\leq k,\,1\leq j\leq k\}$ to within $\varepsilon_k$,
  where $\{b_{ij}\}_{j=1}^{\infty}$ is a dense subset of $B_i$.

  Then by 2.3 and 2.4 of \cite{Ell2}, the above diagram defines a homomorphism from $B=\underrightarrow{lim}(B_n,\psi_{n,m})$ to
   $A=\underrightarrow{lim}(\widetilde{A}_n,\widetilde{\phi}_{n,m})$. It is routine to check that the homomorphism is in fact an isomorphism.
   This ends the proof.
\end{ddef}
\proof[Acknowledgements]
The author would like to thank Guihua Gong for helpful suggestions and discussions.

\end{document}